\newcommand\redout{\bgroup\markoverwith
{\textcolor{red}{\rule[0.5ex]{2pt}{0.8pt}}}\ULon}
\numberwithin{equation}{section}
\newtheorem{theorem}{Theorem}[section]
\newtheorem{prop}[theorem]{Proposition}
\newtheorem{lemma}[theorem]{Lemma}
\theoremstyle{definition}
\newtheorem{exmp}[theorem]{Example}
\theoremstyle{remark}
\newcommand{\R}{\mathbb{R}}
\newcommand{\HH}{\mathbb{H}}
\newcommand{\I}{\mathbb{I}}
\renewcommand{\hat}{\widehat}
\newcommand{\eps}{\varepsilon}
\newcommand{\scriptM}{\mathcal{M}}
\newcommand{\scriptT}{\mathcal{T}}
\begin{document}
\title{Inequalities of Brascamp--Lieb type on the Heisenberg group}
\author{Kaiyi Huang}
\author{Betsy Stovall}
\address{University of Wisconsin--Madison}
\email{khuang247@wisc.edu}
\email{stovall@math.wisc.edu}

\begin{abstract}
    We prove both necessary and sufficient conditions for $L^p$-bound\-ed\-ness of certain multilinear generalized Radon transforms that arise as Heisenberg group analogues of the Brascamp--Lieb inequalities on Euclidean space. The necessary and sufficient conditions coincide in some important special cases, but differ in others.
\end{abstract}

\maketitle

\section{Introduction}

In recent work,  F\"assler--Pinamonti \cite{FPmathz} and F\"assler--Orponen--Pinamonti \cite{FOParxiv} introduced a multilinear form and corresponding (sharp) $L^p$ inequality that is the natural Heisenberg group analogue of the Loomis--Whitney inequality.  In this article, we take this generalization one step further and establish $L^p$ inequalities for a large class of multilinear forms that arise as the Heisenberg analogues of the Brascamp--Lieb inequalities on Euclidean space.   

More concretely, we work in the Heisenberg group $\HH^n$, which we identify with $\R^n \times \R^n \times \R$, endowed with the group operation $(x,y,t) \bullet (x',y',t') = (x+x',y+y',t+\tfrac12(x\cdot y' - y\cdot x'))$.  Given $V \leq \R^n$ (for $W$ a vector space, $V \leq W$ means $V$ is a vector subspace of $W$), we let $L$ denote orthogonal projection to $V$ and define \textit{vertical projections} $\pi_L:\HH^n \to V \times \R^n \times \R$, $\tilde \pi_L:\HH^n \to \R^n \times V \times \R$ by the formulae
\begin{gather*}
\pi_L(x,y,t) := (x,y,t)\bullet (\hat L x,0,0)^{-1} = (Lx,y,t+\tfrac12(\hat Lx) \cdot (\hat Ly)), \\
\tilde\pi_L(x,y,t) := (x,y,t) \bullet (0,\hat Ly,0)^{-1} = (x,Ly,t-\tfrac12(\hat Lx) \cdot (\hat Ly)),
\end{gather*}
where $\hat L:=\mathbb I_n - L$ is orthogonal projection to $V^\perp$.  
Recalling the Brascamp--Lieb inequalities, which were introduced in \cite{BrasLiebAdvances73}, and whose boundedness was characterized in \cite{BCCTshort, BCCTlong, Lieb}, we are led to consider  $(m+m^\prime)$-tuples $(L_j)_{j=1}^{m+m^\prime}$, $L_j:\R^n \to V_j$, and a corresponding multilinear form 
\begin{equation} \label{E:main ml form}
\scriptM(f_1,\ldots,f_{m+m^\prime}) := \int_{\HH^n} \prod_{j=1}^{m+m^\prime} f_j \circ \pi_j(x,y,t) \, dx\, dy\, dt,
\end{equation}
where we use the notation
\begin{equation} \label{E:symmetry}
\pi_j:=\pi_{L_j}, \quad 1 \leq j \leq m,\qquad \pi_{j+m}:=\tilde \pi_{L_{j+m}}, \quad 1 \leq j \leq m^\prime.
\end{equation}
The purpose of this article is to establish both necessary and sufficient conditions on exponents $p_1,\ldots,p_{m+m^\prime} \in [1,\infty]$ such that the inequality 
\begin{equation} \label{E:Lp ineq}
\scriptM(f_1,\ldots,f_{m+m^\prime}) \lesssim \prod_{j=1}^m \|f_j\|_{L^{p_j}(V_j \times \R^n \times \R)} \times\prod_{j=m+1}^{m+m^\prime}\|f_{j}\|_{L^{p_{j}}(\R^n \times V_j \times \R)},
\end{equation}
holds for all nonnegative, continuous functions $f_1,\ldots,f_{m+m^\prime}$ with compact support, with implicit constant independent of the $f_k$.  

In fact, because inserting additional terms $f_j \circ \pi_j$ with corresponding exponents $p_j=\infty$ makes no difference in the validity of \eqref{E:Lp ineq}, the problem immediately reduces to establishing necessary and sufficient conditions for boundedness of the form in \eqref{E:main ml form} with $m=m^\prime$, $L_j=L_{j+m}$ for $1\leq j\leq m$.  (Due to the notational simplifications, we state our results below for this superficially less general form.)  

In Section~\ref{S:necessity}, we will prove the following necessary conditions on the $p_k$ for validity of (the above-mentioned reduced form of) \eqref{E:Lp ineq}.

\begin{theorem}\label{T:nec}
Given $m$ and vector spaces $V_1,\ldots,V_{2m} \leq \R^n$, with $V_j=V_{j+m}$, $1 \leq j \leq m$, if the inequality  
\begin{equation} \label{E:Lp ineq sym}
\scriptM(f_1,\ldots,f_{2m}) \lesssim \prod_{j=1}^m \|f_j\|_{L^{p_j}(V_j \times \R^n \times \R)}\|f_{j+m}\|_{L^{p_{j+m}}(\R^n \times V_j \times \R)},
\end{equation}
holds uniformly over all nonnegative, continuous $f_1,\ldots,f_{2m}$, then $p_j \geq 1$, $1 \leq j \leq 2m$, and the following  conditions all hold for all subspaces $V \leq \R^n$ and $W \leq V^\perp$:
\begin{gather}
\tag{A}\label{A} n+1=\sum_{j=1}^{m}\frac{\dim V_j+1}{p_j}+\frac{n+1}{p_{j+m}}=\sum_{j=1}^{m}\frac{n+1}{p_j}+\frac{\dim V_j+1}{p_{j+m}};\\
\tag{B1}\label{B1} \dim V+1 \leq \sum_{j=1}^{m}\frac{\dim L_jV+1}{p_j}+\frac{\dim V+1}{p_{j+m}};\\
\tag{B2}\label{B2} \dim V+1 \leq \sum_{j=1}^{m}\frac{\dim V+1}{p_j}+\frac{\dim L_jV+1}{p_{j+m}};\\
\tag{C1}\label{C1}
\dim W^\perp - \dim V \geq\displaystyle\sum_{j=1}^m\frac{\dim W^\perp-\dim L_jV}{p_j}+\frac{\dim(L_jW)^\perp-\dim V}{p_{j+m}};\\
\tag{C2}\label{C2} \dim W^\perp - \dim V \geq\displaystyle\sum_{j=1}^m\frac{\dim(L_jW)^\perp-\dim V}{p_j}+\frac{\dim W^\perp-\dim L_j V}{p_{j+m}}.
\end{gather}
\end{theorem}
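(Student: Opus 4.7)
The plan is to derive each necessary condition by an explicit choice of test functions in \eqref{E:Lp ineq sym}.

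For condition (A), I use the two-parameter family of Heisenberg automorphisms $\delta_{r,s}(x,y,t) := (rx, sy, rst)$. A direct computation shows these are group automorphisms of $\HH^n$ for all $r,s > 0$, and they commute with $\pi_{L_j}$ and $\tilde\pi_{L_j}$ up to the obvious dilations on the codomains: $\pi_{L_j}\circ\delta_{r,s} = \delta^{V_j,\R^n}_{r,s}\circ\pi_{L_j}$ with $\delta^{V_j,\R^n}_{r,s}(u,v,w) := (ru,sv,rsw)$, and similarly for $\tilde\pi_{L_j}$. Substituting $f_j \mapsto f_j\circ\delta^{V_j,\R^n}_{r,s}$ (and the analogous substitution for $f_{j+m}$) into \eqref{E:Lp ineq sym} and performing the corresponding change of variable in the defining integral, both sides transform by explicit powers of $r$ and $s$; requiring the inequality to hold uniformly in $(r,s)>0$ forces the $r$-power and the $s$-power to vanish separately, giving the two scaling identities in (A). The lower bound $p_j\geq 1$ follows from applying \eqref{E:Lp ineq sym} to an approximate identity $h_\epsilon$ concentrated in the $j$-th variable: the left side stays bounded below by a positive constant while $\|h_\epsilon\|_{p_j}\to 0$ as $\epsilon\to 0$ whenever $p_j<1$, producing a contradiction.

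Conditions (B1), (B2) are related by the $j\leftrightarrow j+m$ symmetry, so I focus on (B2). Given $V\leq\R^n$, consider the Heisenberg box
\[
E_{r,\epsilon} := \{(x,y,t)\in\HH^n : |x|\leq r,\ \dist(y,V)\leq\epsilon,\ |y|\leq r,\ |t|\leq Cr^2\}.
\]
For each $j$, choose $F_j$ (resp.\ $\tilde F_{j+m}$) to be an analogous box in $V_j\times\R^n\times\R$ (resp.\ $\R^n\times V_j\times\R$) containing the image $\pi_{L_j}(E_{r,\epsilon})$ (resp.\ $\tilde\pi_{L_j}(E_{r,\epsilon})$). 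The $t$-extent $\sim r^2$ is sufficient to absorb the Heisenberg shift $\tfrac12\hat L_jx\cdot\hat L_jy = O(r^2)$. Setting $f_k = \1_{F_k}$ makes $\prod_k f_k\circ\pi_k\equiv 1$ on $E_{r,\epsilon}$, so \eqref{E:Lp ineq sym} yields $|E_{r,\epsilon}|\lesssim \prod_j |F_j|^{1/p_j}|\tilde F_{j+m}|^{1/p_{j+m}}$. All three measures are explicit monomials in $r,\epsilon$ with exponents involving $\dim V, \dim V_j, \dim L_jV$; sending $\epsilon\to 0$ gives an inequality among the $\epsilon$-exponents which, once combined with (A), rearranges to (B2). (B1) follows by the symmetric construction concentrating $x$ rather than $y$ near $V$.

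For (C1), (C2), indexed by pairs $(V,W)$ with $W\leq V^\perp$, I use test sets carrying two independent concentration parameters. A representative choice for (C1) is the anisotropic box
\[
E := \{(x,y,t) : |\Pi_V x|\leq r,\ |\Pi_{V^\perp}x|\leq\epsilon,\ |\Pi_{W^\perp}y|\leq\rho,\ |\Pi_W y|\leq\epsilon,\ |t|\leq Cr\rho\},
\]
where $\Pi_{V'}$ is orthogonal projection to $V'\leq\R^n$ and the $t$-scale $\sim r\rho$ is chosen to absorb the Heisenberg shift. Image measures $|F_j|$ and $|\tilde F_{j+m}|$ now involve the dimensions of $L_jV, L_jV^\perp, L_jW, L_jW^\perp$ inside $V_j$, via the identity $L_jV + L_jV^\perp = V_j$ (so that the $\epsilon$-thickening of $L_jV$ in $V_j$ has measure $\approx r^{\dim L_jV}\epsilon^{\dim V_j - \dim L_jV}$) and its $W$-analogues. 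Extracting the $\epsilon$-exponent inequality and simplifying using (A), (B1), (B2) yields (C1), and the dual construction (with $x$ concentrated near $V^\perp$ and $y$ near $W$) produces (C2).

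The main obstacle will be in (C1), (C2): the requirement that the $t$-extent $\sigma\sim r\rho$ absorb the Heisenberg shift forces $r$- and $\rho$-scalings into all the test measures, and these must be reconciled with the constraints (A), (B1), (B2) applied to the auxiliary subspaces $V,V^\perp,W,W^\perp$, either by arranging the $r,\rho$-dependencies to cancel on both sides or by truncating $r,\rho$ to fixed values so that only the $\epsilon$-exponent inequality survives, and this surviving inequality must reduce precisely to (C1) or (C2) after substituting (A)--(B2). Correctly bookkeeping the subspace dimensions, the intersections $L_jV\cap L_jV^\perp$ (and their $W$-analogues), and the direction in which the Heisenberg shift propagates through each $\pi_{L_j}$ versus $\tilde\pi_{L_j}$ is the delicate step.
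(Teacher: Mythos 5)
Your derivations of (A), the condition $p_j\geq 1$, and (B1), (B2) are all sound, and broadly parallel the paper's strategy of testing the restricted weak-type inequality $|\Omega|\lesssim\prod_k|\pi_k(\Omega)|^{1/p_k}$ on boxes and sending a parameter to an extreme. For (A) you use the two-parameter Heisenberg automorphisms $\delta_{r,s}$ and scale-invariance; the paper instead plugs in $\Omega=\{|x|\leq r,|y|\leq 1,|t|\leq r\}$ and lets $r\to 0,\infty$ (and then swaps $x\leftrightarrow y$). Both give (A), and your automorphism route is arguably cleaner since it yields the two identities at once. For (B) your construction (with a new parameter $\epsilon$ concentrating $y$ near $V$ and $t$-extent $r^2$, then $\epsilon\to 0$) differs from the paper's (which stretches $x_V$ and $t$ by $R$ and sends $R\to\infty$), but after simplification using (A) your $\epsilon$-exponent inequality does reduce correctly to (B2).

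The gap is in (C1), (C2), and you have actually located it yourself in your last paragraph: choosing the $t$-extent $\sim r\rho$ to ``absorb'' the Heisenberg shift is precisely what destroys the information you need. The paper takes $\Omega=\{|x_V|\leq R,\ |x_{V^\perp}|\leq 1,\ |y_{W^\perp}|\leq R^{-1},\ |y_W|\leq 1,\ |t|\leq 1\}$ (note $\rho=r^{-1}$, no extra $\epsilon$, and $t$-extent $O(1)$, \emph{not} $O(r\rho)$); the Heisenberg shift $\tfrac12\hat L_jx\cdot\hat L_jy$ is then \emph{not} $O(1)$, and the key step is a volume-preserving shear $(u,v,w)\mapsto(u,v,w-\phi(u,v))$ in the codomain of each $\pi_j$ that removes the dominant $O(R^{-1})$ contribution (which is a bilinear function of the image coordinates $u=L_jx$, $v=y$ up to an $O(1)$ error), so that $|\pi_j(\Omega)|$ obeys the sharper bound that encodes the dimensions $\dim(L_jV^\perp)^\perp$ and $\dim L_jW^\perp$. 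Without the shear, your boxes produce a strictly weaker $\epsilon$-exponent inequality. Concretely, take $m=1$, $L_1=0$, $V_1=\{0\}$, $p_1=p_2=\tfrac{n+2}{n+1}$ (the Radon-transform endpoint case, which obeys \eqref{E:Lp ineq sym}), and $V=W=\{0\}$: (C1) reads $n\leq n/p_1$, whereas the $\epsilon$-exponent from your box $E_{r,\rho,\epsilon}$ reads $n\geq n/p_2$. These are different inequalities, and you cannot recover one from the other using only (A), (B1), (B2). So the ``delicate step'' you flag is not just bookkeeping — the construction as stated does not produce (C1) or (C2), and the missing ingredient is the paper's skew transformation argument (together with the coupled choice $\rho=r^{-1}$ and $t$-extent $O(1)$).
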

Here, for any subspace $V\leq\R^n$, $V^\perp$ is the orthogonal complement of $V$ in $\R^n$, while $(L_jV)^\perp$ is the orthogonal complement of $L_jV$ in $L_j(\R^n)=V_j$.

We see that $\dim(L_j V^\perp)^\perp \leq \dim(L_j V)$, and in the case of equality, \eqref{C1} and \eqref{C2} (with $W=V^\perp$) are equivalent to 
\begin{gather}
\tag{C}\label{C}
\sum_{j=1}^m (\dim V - \dim L_j V)\left(\tfrac1{p_j}-\tfrac1{p_{m+j}}\right)=0, \qquad V \leq \R^n,
\end{gather}
which is stronger than \eqref{C1} and \eqref{C2} in the more general case $W \leq V^\perp$.

Our main theorem is that, under the additional hypothesis \eqref{C}, the conditions above are also sufficient.  

\begin{theorem}\label{T:main}
Given $V_1,\ldots,V_{2m} \leq \R^n$, obeying the condition $V_j=V_{j+m}$, and exponents $p_1,\ldots,p_{2m} \in [1,\infty]$, obeying \eqref{A}, \eqref{B1}, \eqref{B2}, and \eqref{C}, Inequality \eqref{E:Lp ineq sym} holds uniformly over all nonnegative, continuous $f_1,\ldots,f_{2m}$.  
\end{theorem}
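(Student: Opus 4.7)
The plan is to prove Theorem~\ref{T:main} by induction on the Euclidean dimension $n$. The base case $n=1$ reduces, after factoring out any $L_j=\mathbb I_n$ (whose corresponding $f_j$ depend on all of $\HH^1$ and separate out via H\"older), to the vertical Heisenberg Loomis--Whitney inequality of F\"assler--Pinamonti~\cite{FPmathz}; the case $n=0$ is vacuous.

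For the inductive step, following the BCCT philosophy, I would look for a proper nonzero \emph{critical subspace} $V\leq\R^n$, meaning a $V$ for which either \eqref{B1} or \eqref{B2} holds with equality. When no such $V$ exists, all the inequality-type conditions are strict and the bound follows by interpolation between the degenerate endpoints obtained by sending various $p_j\to\infty$ (and absorbing the corresponding $f_j$ factors through H\"older). Given a critical $V$, the goal is to disintegrate $\scriptM$ along the splitting $\R^n=V\oplus V^\perp$ into an ``inner'' Heisenberg Brascamp--Lieb form on $\HH^{\dim V^\perp}$, parametrized by $(x_V,y_V)$, and an ``outer'' Heisenberg Brascamp--Lieb form on $\HH^{\dim V}$ in the remaining variables. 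The criticality of $V$ is exactly what forces the descended exponents on each slice to satisfy the analogues of \eqref{A}, \eqref{B1}, \eqref{B2}, and \eqref{C}, so that the inductive hypothesis applies to both forms; H\"older then assembles the slice estimates into the full bound.

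The principal obstacle is the Heisenberg twist: the $t$-component of $\pi_{L_j}(x,y,t)$ is $t+\tfrac{1}{2}(\hat L_j x)\cdot(\hat L_j y)$, which generically couples the $V$- and $V^\perp$-components of $(x,y)$ since $L_j$ need not preserve the splitting $V\oplus V^\perp$. This prevents a clean product decomposition $\HH^n\cong\HH^{\dim V}\times\HH^{\dim V^\perp}$ and forces a slice-dependent change of variables in $t$ to separate scales. The bulk of the work lies in executing this separation, verifying via the linear-algebraic identities among $L_j V$, $L_j V^\perp$, and $V_j$ that the sliced forms satisfy the hypotheses of the theorem in lower dimensions, and checking that the descended exponents are exactly those needed to invoke the inductive hypothesis. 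Handling the cross-terms produced by the twist, and confirming that condition \eqref{C} (rather than the weaker \eqref{C1}, \eqref{C2}) is what makes the descent close, is where I expect the delicate part of the argument to reside.
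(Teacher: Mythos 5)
Your broad strategy---induction on critical subspaces in the spirit of BCCT---is exactly the route the paper takes, and you correctly identify that the Heisenberg twist couples the $V$- and $V^\perp$-components, which is the main technical difficulty. However, there are several concrete gaps in your proposal.

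First, your slicing is oriented the wrong way. If $W$ is critical, the inner integral should be over $W\times W\times\R$ (the critical subspace plus the $t$-variable), with the outer integral over $W^\perp\times W^\perp$. You propose the reverse: inner over $V^\perp$, parametrized by $(x_V,y_V)$. Criticality of $V$ gives precisely the scaling identity $\dim V+1 = \sum_j\frac{\dim L_jV+1}{p_j}+\frac{\dim V+1}{p_{j+m}}$ that the inner form over $V$ needs, but it does \emph{not} give the corresponding identity for $V^\perp$ (since $\dim L_jV^\perp$ can strictly exceed $\dim V_j - \dim L_jV$). Your inner form over $V^\perp$ would generically fail condition~\eqref{A}. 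Second, and relatedly, the outer form is not a Heisenberg form but a genuinely \emph{Euclidean} Brascamp--Lieb form: the $t$-integration is entirely absorbed into the inner slice, so the outer integral is over $W^\perp\times W^\perp$ with no vertical variable, and you invoke the linear BCCT theorem there, not the inductive hypothesis. Third, the inductive statement cannot be Theorem~\ref{T:main} itself: after slicing, the inner form acquires additive offsets in the quadratic twist coming from the frozen $W^\perp$-variables, so you must prove a strengthened statement for the deformed forms $\scriptM^{\mathbf a,\mathbf b}$ (uniformly in $\mathbf a,\mathbf b$). You notice the cross-terms but do not build the needed generalization into your inductive hypothesis.

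Finally, your claim that the no-critical-subspace case ``follows by interpolation between the degenerate endpoints obtained by sending various $p_j\to\infty$'' is not correct. When $\mathbf p$ is extreme and no proper critical subspace exists, the paper's analysis (which needs condition~\eqref{C} essentially) shows the data either reduces to lower multilinearity (an $\infty$-pair with matching $L_j$), or one is forced into $m=1$, $L_1=0$, $p_1=p_2=\frac{n+2}{n+1}$. This is not a degenerate endpoint: it is the sharp $L^{(n+2)/(n+1)}\to L^{n+2}$ bound for the Radon/averaging transform, a nontrivial theorem that must be imported, not obtained by interpolation or H\"older.
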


In Section~\ref{S:examples}, we will prove that \eqref{C} is not necessary in general, though Theorem~\ref{T:main} is sharp when the $L_j$ are coordinate projections whose kernels direct sum to $\R^n$. Determining sharp necessary and sufficient conditions for \eqref{E:Lp ineq sym} seems like an interesting question and, furthermore, there are other natural generalizations one might consider, such as replacing $L_j\oplus \mathbb I_n$ with a more general linear map,  replacing $\HH^n$ with a more general nilpotent Lie group, or by inserting a cutoff function into the integral defining \eqref{E:main ml form}, but we do not address these generalizations here.    

\subsection*{Prior results} Our results fall into a broad body of literature establishing necessary and sufficient conditions for boundedness of operators, linear and multilinear, lying within various classes of generalized Radon transforms. 
Extensive histories and bibliographies describing this prior work may be found in \cite{TaoWright} and  \cite{GressmanTesting}, which also represent important milestones in the general theory, but here we focus on those results most directly relevant to the present article.  

Considering a general (localized) multilinear form  
\begin{equation} \label{E:general T}
\scriptT(f_1,\ldots,f_m):=\int_{\R^n} \prod_{j=1}^m f_j \circ \pi_j(x)\, a(x)\,dx,
\end{equation}
with the $\pi_j:\R^n \to \R^{n_j}$ smooth submersions and $a$ (say) the characteristic function of a ball, optimal or near-optimal Lebesgue space bounds are known if $n_j=n-1$ for all $j$ \cite{StovallMultilin, TaoWright}, or if the $\pi_j$ are all linear \cite{BCCTshort}; optimal bounds are also known in a wide variety of other circumstances, but the conditions on the $\pi_j$ become more restrictive.  
Under an additional scaling condition on the Lebesgue exponents, namely, that $n=\sum_j \frac{n_j}{p_j}$, 
Bennett--Bez--Buschenhenke--Cowling--Flock \cite{BBBCF} have shown that $L^p$ bounds on \eqref{E:general T} in the case of linear $\pi_j$ remain valid under small $C^\infty$ perturbations of the $\pi_j$, and Gressman has shown \cite{GressmanTesting} equivalence of $L^p$ bounds with a nontrivial testing condition.  

It is thus natural to consider multilinear generalized Radon transforms in which the underlying maps $\pi_j$ are nonlinear, the dimensions of their codomains vary, or the above-mentioned scaling condition fails; the present setting is a relatively simple instance in which all of these conditions are met.  Moreover, there has been significant recent interest in establishing isoperimetric and other geometric inequalities in the Heisenberg group.  (See, for instance, \cite{balogh2018geometric, fassler2016improved, FOParxiv, ruzhansky2020geometrid}.)  

Indeed, as mentioned previously, our starting point is recent work of F\"assler--Orponen--Pinamonti \cite{FOParxiv}, who, motivated by questions arising in the geometric measure theory of Heisenberg groups, had introduced the ``Loomis--Whitney'' variant of \eqref{E:main ml form}, in which
\begin{equation} \label{E:LoomisWhitneyCase}
L_j(x_1,\ldots,x_n) := (x_1,\ldots,\widehat{x_j},\ldots,x_n), \qquad 1 \leq j \leq n,  
\end{equation}
and F\"assler--Pinamonti had established sharp $L^p$ bounds in \cite{FPmathz}.  
Though the form \eqref{E:main ml form}, with maps \eqref{E:LoomisWhitneyCase}, and its sharp or sharp-to-endpoints $L^p$ inequalities  had implicitly appeared in prior work \cite{Littman73, StovallMultilin}, it had not previously been recognized as a variant of the Loomis--Whitney inequality, and this arrangement naturally raises the question of generalizations in the spirit of the Brascamp--Lieb inequalities.  Some related questions were posed in   \cite{bramati2019geometric} in a wider class of groups (graded nilpotent Lie algebras). (See also \cite{FPmathz}.)

\subsection*{Outline}  In Section~\ref{S:examples}, we develop a few examples illustrating cases in which Theorem~\ref{T:main} is and is not sharp. In Section~\ref{S:necessity}, we prove the necessity result, Theorem~\ref{T:nec}.  In Section~\ref{S:sufficiency}, we prove the sufficiency result, Theorem~\ref{T:main}.  Our proof strategy for Theorem~\ref{T:main} is the natural extension of the argument in \cite{BCCTshort}, namely, slicing along ``critical'' subspaces and induction.

\subsection*{Notation}    We use the standard notation that $A \lesssim B$ means that $A \leq CB$, where the constant $C$ may depend on $n,m$, the $L_j$, and the $p_j$.  To keep equations within lines, we will frequently abbreviate $L^p$ norms (such as $\|\cdot\|_{L^p(V_j \times \R^n \times \R)}$) with the notation $\|\cdot\|_p$.  Though the underlying measure spaces may vary, even within the same line, the space on which the integration of a particular function is to be taken will always be clear from context.

\subsection*{Acknowledgements}  This work was supported in part by NSF DMS-2246906 and the Wisconsin Alumni Research Foundation (WARF).  

\section{Examples}\label{S:examples}

In this section we introduce a few examples illustrating the sharpness, or lack thereof, in Theroem~\ref{T:main}.

\begin{exmp}\label{ex: Finner}
We first consider the natural Heisenberg analogue of the Finner case \cite{finner1992generalization} of the Brascamp--Lieb inequalities, in which the $V_j$ are coordinate subspaces.  

More precisely, for $1\leq j\leq m$, consider $M_j\coloneqq\{m_j^1,\dots,m_j^{n_j}\}\subset\{1,\dots,n\}$ and $L_j$ the orthogonal projections onto subspaces spanned by coordinates whose indices are in $M_j$, and set $L_{j+m}:=L_j$. Then elementary linear algebraic considerations show that conditions (B1) and (B2) hold for arbitrary $V \leq \R^n$ if and only if they hold for all coordinate subspaces. Furthermore, for a coordinate subspace $V$, $L_j(V^\perp) =(L_j V)^\perp$, $1 \leq j \leq m$, so conditions \eqref{C1} and \eqref{C2} coincide and imply \eqref{C} for coordinate subspaces.  (However, \eqref{C} may fail for subspaces that are not coordinate subspaces.)

By specializing a bit further, we obtain a case in which the necessary and sufficient conditions coincide.  

\begin{prop} \label{P:finner}
    Given coordinate subspaces $V_1,\ldots,V_{2m} \leq \R^n$, with $V_j=V_{j+m}$ and $\displaystyle\oplus_{j=1}^m \ker L_j^\perp=\R^n$ (recall that $V_j=L_j(\R^n)\leq\R^n$ and $V_j^\perp=\ker L_j$), inequality \eqref{E:Lp ineq sym} holds if and only if conditions \eqref{A}, \eqref{B1}, \eqref{B2}, \eqref{C} hold for all coordinate subspaces $V \leq \R^n$.
\end{prop}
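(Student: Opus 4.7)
The plan is to handle necessity and sufficiency separately; the first direction is an immediate corollary of Theorem~\ref{T:nec}, while the second requires adapting the (still-to-come) proof of Theorem~\ref{T:main} to the coordinate setting.

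For necessity, Theorem~\ref{T:nec} already gives (A), (B1), and (B2) for arbitrary subspaces, so in particular for coordinate ones. To deduce (C) at a coordinate $V$, I would take $W = V^\perp$ in (C1) and (C2) and use the elementary observation that, when $V$ is a coordinate subspace and each $L_j$ is a coordinate projection, $L_j V$ and $L_j V^\perp$ are complementary coordinate subspaces of $V_j$, so
\[
\dim(L_jV^\perp)^\perp = \dim V_j - \dim L_jV^\perp = \dim L_jV.
\]
Substituting this identity, (C1) becomes
\[
0 \le \sum_{j=1}^{m} (\dim V - \dim L_jV)\bigl(\tfrac1{p_j} - \tfrac1{p_{j+m}}\bigr),
\]
and (C2) yields the reverse inequality; their combination is precisely (C) for every coordinate $V$.

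For sufficiency, the plan is to rerun the induction-and-slicing argument of Theorem~\ref{T:main} with every reference to ``subspace'' replaced by ``coordinate subspace.'' That proof inducts on $n$: at each step, if a proper critical subspace $V$ (equality in (B1) or (B2)) exists, it slices the integral $\scriptM$ along $\R^n = V \oplus V^\perp$ into Heisenberg sub-forms on $\HH^{\dim V}$ and $\HH^{\dim V^\perp}$ and invokes the induction hypothesis; otherwise, a direct base-case argument closes. In the Finner setting, I would seek critical \emph{coordinate} subspaces and verify two points: (i) the sub-forms produced by slicing along a coordinate $V$ are again Finner-type problems whose own coordinate-only hypotheses (A), (B1), (B2), (C) are inherited from the original; and (ii) whenever the problem is not already in a base case, a proper critical coordinate subspace exists.

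Point (i) is essentially automatic: if $V$ is a coordinate subspace so is $V^\perp$; the restrictions $L_j|_V$ and $L_j|_{V^\perp}$ are coordinate projections on the respective factors; and every coordinate subspace of $V$ (resp.\ $V^\perp$) is a coordinate subspace of $\R^n$, so descent of hypotheses is routine. The main obstacle is point (ii): the proof of Theorem~\ref{T:main} selects a critical subspace from among \emph{all} subspaces of $\R^n$, and one must verify that the analogous selection over the finite Boolean lattice of coordinate subspaces still produces a critical one whenever the problem is non-trivial. Since (B1) and (B2) are linear in the $\dim L_jV$ and $\dim V$, and the lattice of coordinate subspaces is closed under intersection and sum, this should follow from a finite-combinatorial extremization replacing the Grassmannian selection used in the general argument; this is where I expect the ``easy modification'' alluded to by the authors to live.
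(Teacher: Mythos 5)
Your approach is the paper's: the necessity half is exactly the argument indicated in Example~\ref{ex: Finner} (Theorem~\ref{T:nec} plus the identity $\dim(L_jV^\perp)^\perp=\dim L_jV$ for coordinate $V$ and coordinate $L_j$, which collapses \eqref{C1} and \eqref{C2} at $W=V^\perp$ into \eqref{C}), and your computation there is correct and complete. The paper gives no more detail on sufficiency than ``modify the proof of Theorem~\ref{T:main}'', which is also your plan, so the comparison can only be against the structure of Section~\ref{S:sufficiency}.

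Two points on the sufficiency half. First, a structural inaccuracy: slicing along a critical $W$ does not produce two Heisenberg sub-forms. The inner integral over $W\times W\times\R$ is a Heisenberg form on $\HH^{\dim W}$ handled by the inductive hypothesis, but the outer integral over $W^\perp\times W^\perp$ carries no $t$-variable and is estimated by the \emph{Euclidean} Brascamp--Lieb inequality (Proposition~\ref{P:linear BL}); in the Finner setting this outer step is exactly Finner's theorem, whose hypotheses need only be checked on coordinate subspaces, which is what makes your point (i) go through. Second, your point (ii) is indeed where the content lives, and it is not quite the routine extremization you suggest: the place that genuinely resists transcription is the end of Lemma~\ref{L:base cases}, where the paper uses a Baire-category argument to choose a generic $v\in K_1\setminus\bigcup_{j\ge2}K_j$ and tests \eqref{C} on the (non-coordinate) line $\langle v\rangle$. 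With coordinate kernels one can have $K_1\subseteq\bigcup_{j\ge2}K_j$ at the level of index sets (e.g.\ $K_1=\langle e_1,e_2\rangle$, $K_2=\langle e_1\rangle$, $K_3=\langle e_2\rangle$) even when no single $K_j$ contains $K_1$, so no coordinate line inside $K_1$ avoids all the other kernels and the conclusion $p_1=p_{1+m}$ must be extracted differently (from the coordinate instances of \eqref{C} together with \eqref{A} and the count of infinite exponents). Flagging this step as ``should follow'' leaves the one genuinely non-automatic piece of the modification unproved; the rest of your outline (inheritance of coordinate hypotheses under slicing, closure of the coordinate lattice, the base cases of Lemmas~\ref{L:0 critical} and~\ref{caseiiproof}) does adapt as routinely as you claim.
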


(The proof of Theorem~\ref{T:main} can easily be modified to prove this result.)
\end{exmp}

\begin{exmp}\label{ex: LW}
To lay the groundwork for an example in which condition \eqref{C} is not necessary, we recall the Loomis--Whitney analogue from \cite{FOParxiv, FPmathz}, which is the special case of Example \ref{ex: Finner} when the $V_j$ are the one-dimensional coordinate subspaces.  

Let $m=n=2$, and set $V_1=V_3=\langle e_2\rangle$ and $V_2=V_4=\langle e_1\rangle$.  Thus 
$$
\begin{array}{cc}
\pi_1(x,y,t) = (x_2,y,t+\tfrac12 x_1 y_1) & \pi_2(x,y,t) = (x_1,y,t+\tfrac12 x_2 y_2)\\
\pi_3(x,y,t) = (x,y_2,t-\tfrac12 x_1 y_1) & \pi_4(x,y,t) = (x,y_1,t-\tfrac12 x_2 y_2).
\end{array}
$$
The main result of \cite{StovallMultilin} states that in order to determine the (restricted weak-type) $L^p$ inequalities for $\scriptM$, we consider vector fields tangent to the level curves of the $\pi_j$, namely, 
\[
X_j(x,y,t)=\begin{cases}
    \frac{\partial}{\partial x_j}-\frac{1}{2}y_j\frac{\partial}{\partial t},&1\leq j\leq 2,\\
    \frac{\partial}{\partial y_{j-2}}+\frac{1}{2}x_{j-2}\frac{\partial}{\partial t},&3\leq j\leq 4,
\end{cases}
\]
and we need to determine for which $j,k$ $\{X_1,X_2,X_3,X_4,[X_j,X_k]\}$ forms a frame for $\R^5$.  Since these vector fields form a frame if and only if $\{j,k\} = \{1,3\}$ or $\{j,k\} = \{2,4\}$, we determine that the extreme points of the Riesz diagram for \eqref{E:Lp ineq sym} are $(\frac{1}{5},\frac{2}{5}, \frac{1}{5}, \frac{2}{5})$ and $(\frac{2}{5},\frac{1}{5}, \frac{2}{5}, \frac{1}{5})$. This conclusion coincides with the corresponding result in \cite{FPmathz}, which also established the sharp strong-type bounds. 
\end{exmp}

\begin{exmp}\label{ex: skewed}
Perturbing the preceding example slightly, however, leads to a case in which \eqref{C} is not necessary.  

Consider $m=n=2$, $V_1=V_3 = \langle e_2 \rangle$ and $V_2=V_4=\langle e_1+e_2 \rangle$. Thus,
\begin{align*}
\pi_1(x,y,t) &= (x_2,y,t+\tfrac12 x_1 y_1) \\ \pi_2(x,y,t) &= (\tfrac12 (x_1+x_2),y,t+\tfrac14 (x_1-x_2)(y_1-y_2))\\
\pi_3(x,y,t) &= (x,y_2,t-\tfrac12 x_1 y_1) \\ \pi_4(x,y,t) &= (x,\tfrac12 (y_1+y_2),y,t-\tfrac14 (x_1-x_2)(y_1-y_2)).
\end{align*}
The following vector fields are tangent to the level curves of the $\pi_j$:  
\[
\begin{array}{rcl}
    X_1(x,y,t)&=&\frac{\partial}{\partial x_1}-\frac{1}{2}y_1\frac{\partial}{\partial t},\\
    X_2(x,y,t)&=&\frac{\partial}{\partial x_2}-\frac{\partial}{\partial x_1}+\tfrac12(y_1-y_2)\frac{\partial}{\partial t},\\
    X_3(x,y,t)&=&\frac{\partial}{\partial y_1}+\frac{1}{2}x_1\frac{\partial}{\partial t},\\
    X_4(x,y,t)&=&\frac{\partial}{\partial y_2}-\frac{\partial}{\partial y_1}-\tfrac12(x_1-x_2)\frac{\partial}{\partial t}.
\end{array}
\]
Since the kernels of $L_1$ and $L_2$ are not orthogonal complements of each other as in Loomis-Whitney case, there are more choices of $j,k$ that lead to a frame of the form $\{X_1,X_2,X_3,X_4,[X_j,X_k]\}$, namely $\{1,3\},\{1,4\}, \{2,3\}, \{2,4\}$. Thus, by \cite{StovallMultilin}, the restricted weak-type Riesz diagram for  \eqref{E:Lp ineq sym} is the convex hull of the points  $(\frac15,\frac25,\frac15,\frac25)$, $(\frac25,\frac15,\frac25,\frac15)$, $(\frac15,\frac25,\frac25,\frac15)$ and $(\frac25,\frac15,\frac15,\frac25)$. This is precisely the region determined by \eqref{A}, \eqref{B1} and \eqref{B2}, but condition \eqref{C} fails for all points of this region with $p_1 \neq p_3$.
\end{exmp}

\section{Necessity} \label{S:necessity}

This section will be devoted to the proof of the necessary conditions in Theorem~\ref{T:nec}.   

We will prove that \eqref{A}, \eqref{B1}, \eqref{B2}, \eqref{C1} and \eqref{C2} are necessary for validity of the weaker, restricted weak-type version of \eqref{E:Lp ineq sym}, that 
\begin{equation} \label{E:RWT ineq}
|\Omega| \lesssim \prod_{j=1}^{2m} |\pi_j(\Omega)|^{1/p_j}
\end{equation}
holds uniformly over bounded, open $\Omega \subseteq \HH^n$.  (That \eqref{E:Lp ineq sym} implies \eqref{E:RWT ineq} may be seen by approximating the $f_j:=\pi_j(\Omega)$ by continuous functions.)  We make the side note that necessity of  $p_j \geq 1$ can be seen by considering, for each $j$, 
$$
\Omega_j:=B^{2n+1}_1(0)\cap\pi_j^{-1}(B^{n_j+n}_{\varepsilon}(0)\times (-1,1)),
$$
where $B^d_\varepsilon(0)$ is the $\eps$-ball centered at zero in $\R^d$.  

\begin{lemma}[Scaling condition]
If inequality \eqref{E:RWT ineq} holds, then condition \eqref{A} holds.  
\end{lemma}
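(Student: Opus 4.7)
I would exploit a two-parameter family of anisotropic dilations $\delta_{r,s}(x,y,t) := (rx, sy, rst)$ on $\HH^n$, applied to an arbitrary bounded open $\Omega$, and then extract condition \eqref{A} by matching powers of $r$ and $s$ on the two sides of \eqref{E:RWT ineq}.

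First I would verify that the maps $\pi_j$ are compatible with these dilations in a suitable sense. For $1 \le j \le m$, the crucial observation is that the ``twist'' $t + \tfrac12(\hat L_j x)\cdot(\hat L_j y)$ is bilinear in $(x,y)$, so under $(x,y,t)\mapsto(rx,sy,rst)$ it scales homogeneously by $rs$; thus $\pi_j(\delta_{r,s}(x,y,t)) = (rL_j x, sy, rs(t + \tfrac12(\hat L_j x)\cdot(\hat L_j y)))$, which is the analogous anisotropic dilation on $V_j\times\R^n\times\R$. An identical computation works for $m+1 \le j \le 2m$ on $\R^n\times V_j\times\R$.

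Next I would compute the Jacobians. Since $V_j=V_{j+m}$ has dimension $\dim V_j$, the scaling exponents are
\[
|\delta_{r,s}(\Omega)| = r^{n+1}s^{n+1}|\Omega|, \quad |\pi_j(\delta_{r,s}\Omega)| = r^{\dim V_j + 1} s^{n+1}|\pi_j(\Omega)| \text{ for } j\le m,
\]
\[
|\pi_{j+m}(\delta_{r,s}\Omega)| = r^{n+1}s^{\dim V_j+1}|\pi_{j+m}(\Omega)| \text{ for } 1\le j\le m.
\]
Substituting $\delta_{r,s}\Omega$ into \eqref{E:RWT ineq} (noting that $\delta_{r,s}\Omega$ is again bounded and open) yields
\[
r^{n+1}s^{n+1}|\Omega| \lesssim \prod_{j=1}^m r^{(\dim V_j+1)/p_j + (n+1)/p_{j+m}}\, s^{(n+1)/p_j + (\dim V_j+1)/p_{j+m}} \prod_{k=1}^{2m}|\pi_k(\Omega)|^{1/p_k},
\]
with implicit constant independent of $r,s>0$. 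Choosing any fixed $\Omega$ for which all $|\pi_k(\Omega)|$ are positive and finite, and letting $r,s\to 0$ and $r,s\to\infty$ independently, the only way the inequality survives is if the powers of $r$ and of $s$ match on both sides. These two matching conditions are precisely the two equalities in \eqref{A}.

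\textbf{Main obstacle.} The computation is essentially mechanical; the only point requiring attention is checking that the twist term $\tfrac12(\hat L x)\cdot(\hat L y)$ in $\pi_j$ really is bilinear (hence $rs$-homogeneous) so that the two-parameter dilation \emph{descends} to a dilation on the target spaces $V_j\times\R^n\times\R$ and $\R^n\times V_j\times\R$. Without this, $\delta_{r,s}\Omega$ would not have a clean image under $\pi_j$, and the Jacobian computation would fail.
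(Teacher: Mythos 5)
Your proof is correct and is essentially the paper's argument: the paper tests \eqref{E:RWT ineq} on the boxes $\{|x|\leq r,|y|\leq 1,|t|\leq r\}$ and $\{|x|\leq 1,|y|\leq r,|t|\leq r\}$, which are exactly the images of a fixed unit box under your dilations $\delta_{r,1}$ and $\delta_{1,r}$, and extracts the two equalities in \eqref{A} by sending $r\to 0$ and $r\to\infty$. Your intertwining identity $\pi_j\circ\delta_{r,s}=\tilde\delta_{r,s}\circ\pi_j$ (valid precisely because the twist is bilinear) just packages the paper's direct computation of $|\pi_j(\Omega)|$ more systematically.
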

\begin{proof}  We take 
$$
\Omega:=\{(x,y,t)\in\R^{2n+1}:|x|\leq r,|y|\leq1,|t|\leq r\},
$$
with $r > 0$. Then $|\Omega|\sim r^{n+1}$, while 
$$
|\pi_j(\Omega)|\sim\begin{cases}
    r^{n_j+1},&1\leq j\leq m,\\
    r^{n+1},&m+1\leq j\leq 2m,
\end{cases}$$
where $n_j\coloneqq\dim V_j:=L_j(\R^n)$ for each $1 \leq j \leq m$. Assuming \eqref{E:RWT ineq}, we have 
$$
r^{n+1}\lesssim r^{A(\mathbf{p},\R^n)}, \qquad A(\mathbf p,\R^n):= \sum_{j=1}^m \tfrac{n_j + 1}{p_j}+\tfrac{n+1}{p_{m+j}}.
$$
Letting $r\to\infty$ and $r\to 0^+$, we obtain
\begin{equation} \label{E:A 1}
n+1=\displaystyle\sum_{j=1}^m\tfrac{n_j+1}{p_j}+\tfrac{n+1}{p_{j+m}}.
\end{equation}
Applying the analogous argument for the set 
$$
\Omega:=\{(x,y,t) \in \HH^n : |x| \leq 1, |y| \leq r, |t| \leq r\},
$$
yields the other identity in \eqref{A},
\begin{equation} \label{E:A 2}
n+1=\displaystyle\sum_{j=1}^m\tfrac{n+1}{p_j}+\tfrac{n_j+1}{p_{m+j}}.
\end{equation}
\end{proof}

\begin{lemma}
If inequality \eqref{E:RWT ineq} holds, then conditions \eqref{B1} and \eqref{B2} hold.  
\end{lemma}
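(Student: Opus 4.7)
The plan is to test the restricted weak-type inequality \eqref{E:RWT ineq} against a one-parameter family of sets $\Omega_\delta \subset \HH^n$ tailored to the given subspace $V \leq \R^n$, then combine the resulting $\delta$-scaling inequality with the equalities in \eqref{A}. Two symmetric families will be needed, one yielding \eqref{B1} and one yielding \eqref{B2}, with the roles of $x$ and $y$ interchanged.

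For \eqref{B1}, let $P$ denote orthogonal projection onto $V$ and set
\[
\Omega_\delta := \{(x,y,t) \in \HH^n : |Px| \leq 1,\ |\hat P x| \leq \delta,\ |y| \leq 1,\ |t| \leq 1\}.
\]
Directly, $|\Omega_\delta| \sim \delta^{n - \dim V}$. The key computation I expect is that, for each $1 \leq j \leq m$, the $L_j x$-coordinate of $\pi_j(\Omega_\delta)$ lies in the Minkowski sum $(L_j V \cap B_1) + (L_j V^\perp \cap B_\delta) \subseteq V_j$, which has measure $\lesssim \delta^{\dim V_j - \dim L_j V}$ by the rank identity $\dim L_j V + \dim L_j V^\perp - \dim(L_j V \cap L_j V^\perp) = \dim V_j$ (itself a consequence of $L_j V + L_j V^\perp = L_j(V + V^\perp) = V_j$). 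The $y$- and shifted-$t$-coordinates contribute $\sim 1$ each, giving $|\pi_j(\Omega_\delta)| \lesssim \delta^{\dim V_j - \dim L_j V}$. For $m+1 \leq j \leq 2m$ the $x$-coordinate is unchanged, so $|\pi_{j+m}(\Omega_\delta)| \lesssim \delta^{n - \dim V}$.

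Feeding these scalings into \eqref{E:RWT ineq} and sending $\delta \to 0^+$ produces
\[
n - \dim V \geq \sum_{j=1}^m \frac{\dim V_j - \dim L_j V}{p_j} + \frac{n - \dim V}{p_{j+m}},
\]
and subtracting this inequality termwise from the first equality in \eqref{A} yields exactly \eqref{B1}. For \eqref{B2}, I would run the symmetric argument with the ``$y$-thin'' test set
\[
\tilde\Omega_\delta := \{(x,y,t) : |x| \leq 1,\ |Py| \leq 1,\ |\hat P y| \leq \delta,\ |t| \leq 1\};
\]
the tilded and untilded maps swap roles, so the bookkeeping becomes $|\pi_j(\tilde\Omega_\delta)| \lesssim \delta^{n - \dim V}$ for $1 \leq j \leq m$ and $|\pi_{j+m}(\tilde\Omega_\delta)| \lesssim \delta^{\dim V_j - \dim L_j V}$, and subtraction from the second equality in \eqref{A} delivers \eqref{B2}.

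The only genuinely nontrivial step I foresee is the volume bound on the Minkowski sum in $V_j$, especially when $L_j V$ and $L_j V^\perp$ overlap nontrivially inside $V_j$; the cleanest route is to pick a basis of $V_j$ adapted to a direct-sum decomposition through $L_j V$ and a complement of $L_j V \cap L_j V^\perp$ inside $L_j V^\perp$, after which the bound reduces to a product-measure computation in coordinates.
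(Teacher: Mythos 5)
Your proposal is correct, and it rests on the same underlying idea as the paper's proof (testing \eqref{E:RWT ineq} against an anisotropic box adapted to $V$), but you choose the complementary test set. The paper takes $\Omega = \{|x_V| \leq R,\ |x_{V^\perp}| \leq 1,\ |y| \leq 1,\ |t| \leq R\}$ with $R \to \infty$ --- widening in the $V$- and $t$-directions --- which gives $|\Omega| \sim R^{\dim V + 1}$, $|\pi_j(\Omega)| \sim R^{\dim L_jV + 1}$ for $j \leq m$, and $|\pi_j(\Omega)| \sim R^{\dim V + 1}$ for $j > m$, so that the $R \to \infty$ limit yields \eqref{B1} \emph{directly}, with no reference to \eqref{A}. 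You instead narrow in the $V^\perp$-direction (holding $x_V$, $y$, $t$ at unit scale and taking $\delta \to 0^+$), which produces the dual exponent inequality
\[
n - \dim V \;\geq\; \sum_{j=1}^m \frac{\dim V_j - \dim L_j V}{p_j} + \frac{n - \dim V}{p_{j+m}},
\]
and then combine with the already-established \eqref{A} to recover \eqref{B1}. The Minkowski-sum volume bound $\bigl|(L_j V \cap B_1) + (L_j V^\perp \cap B_\delta)\bigr| \lesssim \delta^{\dim V_j - \dim L_j V}$, which you correctly flag as the one nontrivial step, does hold by the adapted-basis argument you sketch (the direct-sum decomposition $V_j = L_jV \oplus W_2$ with $W_2$ a complement of $L_jV \cap L_jV^\perp$ inside $L_jV^\perp$ is exactly right, and the change of basis contributes only a $\delta$-independent Jacobian). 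In effect the two sets are related by a parabolic rescaling, which is \emph{not} a symmetry of \eqref{E:RWT ineq} unless \eqref{A} holds, and that is precisely why your route needs \eqref{A} as an extra ingredient while the paper's does not. Both routes are valid since \eqref{A} is proved first; the paper's is marginally more self-contained.
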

\begin{proof}
Let 
$$
\Omega:=\{(x,y,t)\in\R^{2n+1}:|x_V|\leq R, |x_V^\perp| \leq 1, |y|\leq 1,|t|\leq R\},
$$
for $V\leq\R^n$ and $R \geq 1$, where $x_V$ denotes the orthogonal projection of $x$ onto $V$.  Then $|\Omega|\sim R^{\dim V+1}$ and
$$|\pi_j(\Omega)|\sim\begin{cases}
    R^{\dim L_jV+1},&1\leq j\leq m,\\
    R^{\dim V+1},&m+1\leq j\leq 2m.
\end{cases}$$
Inequality \eqref{E:RWT ineq} implies
$$
R^{\dim V+1}\lesssim R^{A(\mathbf p,V)}, \qquad A(\mathbf p,V):= \sum_{j=1}^m\tfrac{\dim L_jV+1}{p_j}+\tfrac{\dim V+1}{p_{j+m}}.
$$
Letting $R\to\infty$, we obtain \eqref{B1},
and by symmetry in $x,y$, we also obtain \eqref{B2}.  
\end{proof}

\begin{lemma}
If inequality \eqref{E:RWT ineq} holds, then for any subspaces $V\leq\R^n$ and $W\leq V^\perp$, \eqref{C1} and \eqref{C2} hold.
\end{lemma}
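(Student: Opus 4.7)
The plan is to adapt the test-set strategy used in the previous two lemmas. For fixed $V \leq \R^n$ and $W \leq V^\perp$, I would construct a one-parameter family of bounded open test sets $\Omega_R \subset \HH^n$, indexed by a scale $R \geq 1$, with the property that
\[
|\Omega_R| \sim R^{\dim W^\perp - \dim V}, \quad |\pi_j(\Omega_R)| \sim R^{\dim W^\perp - \dim (L_j V^\perp)^\perp}, \quad |\pi_{j+m}(\Omega_R)| \sim R^{\dim L_j(W^\perp) - \dim V},
\]
for $1 \leq j \leq m$. Substituting into \eqref{E:RWT ineq} and sending $R\to\infty$ then delivers \eqref{C1}, while a dual family with the roles of $x$ and $y$ exchanged (equivalently, the same family with $R\to 0^+$) delivers \eqref{C2}.

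To build $\Omega_R$, I would use the orthogonal decomposition $\R^n = V \oplus U \oplus W$ with $U := V^\perp \cap W^\perp$, made possible by the hypothesis $W \leq V^\perp$, and take
\[
\Omega_R := \{(x,y,t) : |x_V| \leq R^{a_V},\ |x_U| \leq R^{a_U},\ |x_W| \leq R^{a_W},\ |y_V| \leq R^{b_V},\ |y_U| \leq R^{b_U},\ |y_W| \leq R^{b_W},\ |t| \leq R^c\}.
\]
The exponents $a_\bullet, b_\bullet, c$ will be chosen so that $|\Omega_R|$ scales as prescribed, while, for each $j \leq m$, the image $L_j(\Omega_R^{(x)})$ in $V_j$ sweeps the set $L_j V \cdot R^{a_V} + L_j U \cdot R^{a_U} + L_j W \cdot R^{a_W}$ whose measure, after accounting for overlaps of the three subspaces, equals $R^{-\dim(L_j V^\perp)^\perp}$ up to constants. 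Combined with the $y$-factor $R^{\dim W^\perp}$, this yields the claimed scale of $|\pi_j(\Omega_R)|$ modulo the contribution of the twist term. A symmetric analysis applies to $\pi_{j+m}(\Omega_R)$, with $L_j$ acting on the $y$ components instead.

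The main obstacle will be controlling the ``twist'' term $\tfrac12 (\hat L_j x) \cdot (\hat L_j y)$ that is added to $t$ inside each $\pi_j$. This bilinear expression ranges over a set whose length is, up to constants, $\max_{\ast,\bullet} R^{a_\ast + b_\bullet}$ over those components in the decomposition on which $\hat L_j$ does not vanish. To ensure that $|\pi_j(\Omega_R)|$ attains (and does not exceed) the targeted power of $R$, one must calibrate $c$ to dominate or exactly match the largest twist contribution and verify simultaneously for all $2m$ projections. This is a linear-algebra compatibility problem in the exponents; I expect the structural hypothesis $W \leq V^\perp$ to be precisely what renders the system solvable, by making $V$, $U$, and $W$ mutually orthogonal and thereby decoupling the dominant twist contributions across $\pi_j, \pi_{j+m}$.

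Once the construction is in place, the derivation of \eqref{C1} is a direct application of \eqref{E:RWT ineq} followed by taking logarithms and sending $R\to\infty$, mirroring the limiting step used for conditions \eqref{A} and \eqref{B1}, \eqref{B2}. Condition \eqref{C2} then follows by symmetry under the involution $(x,y) \mapsto (y,x)$, which interchanges $\pi_j$ and $\pi_{j+m}$ and flips the sign of the twist in $\tilde\pi_L$ versus $\pi_L$, so that the analogous family with $R \to 0^+$ (or the same family evaluated in the reverse direction) yields the reversed inequality.
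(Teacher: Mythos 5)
Your overall strategy matches the paper's: construct a one-parameter family of test boxes adapted to the pair $(V,W)$, feed it into \eqref{E:RWT ineq}, and pass to a limit in the scale $R$, exactly as in the preceding lemmas for \eqref{A} and \eqref{B1}, \eqref{B2}. The step from such an estimate to \eqref{C1} and \eqref{C2} (and using the $x \leftrightarrow y$ symmetry or the reverse limit for \eqref{C2}) is also correct and is how the paper concludes.

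However, the proposal stops short of a proof precisely at the load-bearing point. You set up a seven-parameter family of boxes with undetermined exponents $a_V,a_U,a_W,b_V,b_U,b_W,c$, assert that they ``will be chosen'' to give the target scalings, and then explicitly flag the control of the twist $\tfrac12(\hat L_j x)\cdot(\hat L_j y)$ as an unresolved ``linear-algebra compatibility problem,'' saying you ``expect'' the hypothesis $W\leq V^\perp$ to make it solvable. That expectation is the entire content of the lemma: determining a compatible choice and verifying that the twist does not enlarge $|\pi_j(\Omega_R)|$ beyond the stated power of $R$ is where the work lies, and it is not done here. In the paper's argument the choice is much more rigid than your ansatz suggests: only $x_V$ and $y_{W^\perp}$ are scaled (to $R$ and $R^{-1}$ respectively), all other coordinates including $t$ stay $O(1)$, and $R \searrow 0$ gives \eqref{C1} while $R \nearrow \infty$ gives \eqref{C2}. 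With your free parameters it is not at all clear that a consistent choice exists, nor does the writeup attempt to exhibit one.

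Two smaller points. First, you ask for two-sided bounds $|\pi_j(\Omega_R)|\sim R^{\dots}$, but for the limiting argument you only need a lower bound on $|\Omega_R|$ and an upper bound $\lesssim$ on each $|\pi_j(\Omega_R)|$; insisting on $\sim$ makes the twist control needlessly delicate (the image can in general be strictly larger than a naive box). Second, the parenthetical claim that ``a dual family with the roles of $x$ and $y$ exchanged'' is ``equivalently the same family with $R\to0^+$'' is not literally correct unless $W=V^\perp$, since the decomposition in $x$ is along $V\oplus V^\perp$ while in $y$ it is along $W\oplus W^\perp$; both derivations work, but the equivalence as stated does not. The net assessment is that you have the right framework but a genuine gap: the specific test-set construction and the verification of the twist bound, which constitute the substance of the proof, are deferred rather than carried out.
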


\begin{proof}
Let $V \leq \R^n$ and $W\leq V^\perp$, and set
$$
\Omega:=\{(x,y,t)\in\R^{2n+1}:|x_V|\leq R, |y_{W^\perp}|\leq R^{-1},|x_V^\perp|,|y_W| \leq 1, |t|\leq 1\}.
$$
It is immediate that $|\Omega|\sim R^{\dim V-\dim W^\perp}$.  Since $\hat L_jx\cdot\hat L_jy=x\cdot y-L_jx\cdot L_jy$, by change of variables, the projections of $\Omega$ have the same volume as the projections via 
$$
\tilde\pi_j(x,y,t)=\begin{cases}
    (L_jx,y,t+\frac{1}{2}x\cdot y), & 1\leq j\leq m, \\
    (x,L_jy,t-\frac{1}{2}x\cdot y), & m<j\leq2m.
\end{cases}
$$
Now for $R>1$,  
$$
|\pi_j(\Omega)|\lesssim\begin{cases}
    R^{\dim L_jV-\dim W^\perp},&1\leq j\leq m\\
    R^{\dim V-\dim (L_jW)^\perp},&m+1\leq j\leq 2m.
\end{cases}$$
Thus, by \eqref{E:RWT ineq}, sending $R\nearrow \infty$ yields 
 \eqref{C1}; \eqref{C2} follows by symmetry.  
\end{proof}

\section{Sufficiency} \label{S:sufficiency}   
In this section, we adapt the induction on critical subspaces argument of \cite{BCCTshort} to establish sufficiency of conditions \eqref{A}, \eqref{B1}, \eqref{B2} and \eqref{C}.

First of all, it is worth noting that conditions \eqref{B1}, \eqref{B2} and \eqref{C} are equivalent to
\begin{equation}
\tag{B}\label{B}
\begin{aligned}
    \dim V+1  &\leq  \sum_{j=1}^{m}\frac{\dim L_jV+1}{p_j}+\frac{\dim V+1}{p_{j+m}}
    \\
    &=
\sum_{j=1}^{m}\frac{\dim V+1}{p_j}+\frac{\dim L_jV+1}{p_{j+m}}
\end{aligned}
\end{equation}
for all $V\leq\R^n$.

In fact, we will prove a slightly more general result, which immediately implies the sufficiency portion of Theorem~\ref{T:main}.  Given $\mathbf a,\mathbf b \in \prod_{j=1}^{2m} V_j^\perp$ (where we recall $V_j=L_j(\R^n)$),  we define for $1 \leq j \leq m$
$$
\begin{aligned}
 \pi_j^{\mathbf a,\mathbf b}&:=(L_j(x),y,t+\frac{1}{2}(\hat{L}_j(x)+a_j)\cdot(\hat{L}_j(y)+b_j)),\\
\pi_{j+m}^{\mathbf a,\mathbf b}&:=(x,L_j(y),t-\frac{1}{2}(\hat{L}_j(x)+a_{j+m})\cdot(\hat{L}_j(y)+b_{j+m})) ,
\end{aligned}
$$
and the corresponding multilinear form
\begin{equation} \label{E:Mab}
\scriptM^{\mathbf a, \mathbf b} (f_1,\ldots,f_{2m}):= \int_{\HH^n} \prod_{j=1}^{2m} f_j \circ \pi_j^{\mathbf a, \mathbf b}(x,y,t)\, dx\, dy\, dt.
\end{equation}
This section will be devoted to a proof of the following.  
\begin{prop} \label{P:gen}
Let $n,m \geq 1$, $\mathbf p \in [1,\infty]^{2m}$ and projections $L_j:\R^n \to V_j \leq \R^n$, $1 \leq j \leq m$, be given, and assume that conditions \eqref{A} and \eqref{B} hold for all $V \leq \R^n$.  Then 
\begin{equation} \label{E:Lp gen}
\scriptM^{\mathbf a,\mathbf b}(f_1,\ldots,f_{2m}) \lesssim \prod_{k=1}^{2m} \|f_k\|_{p_k}
\end{equation}
holds uniformly over all nonnegative, continuous, compactly supported $f_k$, with implicit constant independent of the constants $\mathbf a,\mathbf b$.
\end{prop}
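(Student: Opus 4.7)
I would mimic the induction on critical subspaces from \cite{BCCTshort}, inducting on $n$. The base case $n=0$ reduces to H\"older on $\R$ (only the center remains, all projections are trivial), with \eqref{A} being precisely the H\"older scaling identity. For $n\geq 1$, I would first locate a proper nontrivial subspace $V\leq\R^n$ that is \emph{critical}, meaning equality holds in both lines of \eqref{B} at $V$.

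If such a $V$ exists, foliate $\HH^n$ by left cosets of the Heisenberg subgroup $V\times V\times\R$. For each pair $(u,v)\in V^\perp\times V^\perp$, the restriction of $\pi_j^{\mathbf{a},\mathbf{b}}$ to the fiber $\{x_V+u\}\times\{y_V+v\}\times\R$ is another shifted vertical projection of the type in Proposition~\ref{P:gen}, but on $\HH^{\dim V}$ with the orthogonal projection $L_j|_V:V\to L_jV$ and new shift parameters $(\mathbf{a}',\mathbf{b}')$ depending affinely on $(u,v)$. This is precisely why the more general form \eqref{E:Mab} is stated with arbitrary shifts and a constant independent of them. A direct linear-algebra check should confirm that \eqref{A} and \eqref{B} for the original data, combined with saturation at $V$, imply the analogous conditions for the reduced data on $V$, as well as a complementary linear Brascamp--Lieb admissibility condition for the outer maps $L_j|_{V^\perp}:V^\perp\to L_jV^\perp$. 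The inductive hypothesis, applied uniformly in shifts, bounds the inner integral pointwise by $\prod_j\|f_j^{(u,v)}\|_{p_j}$ on each fiber; integrating in $(u,v)$ and invoking classical BCCT \cite{BCCTlong} in the outer variables closes the induction.

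If no proper nontrivial critical subspace exists, I would follow the BCCT short strategy: the feasible polytope cut out by \eqref{A} and \eqref{B} is then relatively open in the affine subspace defined by \eqref{A}, so either a small perturbation of $\mathbf{p}$ (combined with the uniformity of the constant in the critical case) reduces to the already-handled situation, or one inspects directly the finitely many extreme points of the polytope. In the Heisenberg setting these extreme points should correspond to H\"older estimates on a single factor combined with trivial integration along the center, recovered after absorbing the others into $L^\infty$ bounds.

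\textbf{Main obstacle.} I expect the hardest part to be the slicing bookkeeping in the critical case: tracking how the symplectic twist $\tfrac12(\hat{L}_jx)\cdot(\hat{L}_jy)$ splits under $\R^n=V\oplus V^\perp$ and verifying that all cross terms between $V$ and $V^\perp$ can indeed be absorbed into the shift parameters $(\mathbf{a}',\mathbf{b}')\in\prod(L_jV)^\perp$ of the reduced problem. The subcritical case is a secondary concern: one must rule out extreme points of the feasible polytope that would require a genuinely new argument not subsumed by the inductive/critical analysis.
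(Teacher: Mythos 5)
Your overall strategy (reduce to extreme $\mathbf p$, slice along a critical subspace $W$, apply the inductive hypothesis on $\HH^{\dim W}$ to the inner integral and linear BCCT to the outer one, absorbing the cross-terms into the shift parameters $(\mathbf a,\mathbf b)$) is exactly the paper's strategy, and your bookkeeping concern about splitting the twist $\tfrac12(\hat L_jx)\cdot(\hat L_jy)$ under $\R^n=W\oplus W^\perp$ is indeed where the technical work in the inductive step lives. That part of the proposal is sound and matches Lemma~\ref{L:ind step}.

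However, there is a genuine gap in your treatment of the case where $\mathbf p$ is extreme and no proper critical subspace exists. You assert that the resulting extreme points ``should correspond to H\"older estimates on a single factor combined with trivial integration along the center.'' This is not what happens. The paper's Lemma~\ref{L:base cases} classifies these extreme points: either (i) some pair $p_j=p_{j'+m}=\infty$ with $L_j=L_{j'}$, which does reduce by H\"older to lower multilinearity; or (ii) $m=1$, $L_1=0$, $p_1=p_2=\tfrac{n+2}{n+1}$. Case (ii) is the essential and genuinely non-H\"older base case: it is the endpoint $L^{\frac{n+2}{n+1}}\times L^{\frac{n+2}{n+1}}$ bound for the bilinear form
\[
\int_{\R^{2n+1}} f_1(y,t+\tfrac12 x\cdot y)\,f_2(x,t-\tfrac12 x\cdot y)\,dx\,dy\,dt,
\]
which is equivalent to the endpoint Radon transform estimate (equivalently, convolution with affine surface measure on the paraboloid), cited to Littman, Calder\'on, Oberlin--Stein, and Christ. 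No amount of H\"older and integration along the center produces this inequality; it is an external analytic input, and the induction does not close without it. Relatedly, your suggestion that the polytope is ``relatively open in the affine subspace defined by (A)'' in the subcritical case, so that a perturbation argument applies, does not work as stated: the polytope is compact, and the actual argument needs the finer count in Lemma~\ref{L:base cases} (that (A) together with the equality in (B) at $V=\R^n$, i.e.\ condition (C), supply at most $m+1$ independent linear relations, so an extreme non-critical $\mathbf p$ forces at least $m-1$ of the $p_j$ to be $\infty$, followed by a pigeonhole and a Baire-category argument on the kernels $\ker L_j$) to land in exactly Conclusions (i) or (ii). That classification, and the Radon transform endpoint it exposes, is the piece your plan is missing.
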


We note that various changes of variables lead to a number of equivalent versions of \eqref{E:Lp gen}. 
 We have chosen the formulation that is superficially the most general and hence the easiest to apply.   

Because the choice of $\mathbf a, \mathbf b$ does not change the conditions \eqref{A} and \eqref{B}, by interpolation, it suffices to prove \eqref{E:Lp gen} when 
$$
\mathbf p:=(p_1^{-1},\ldots,p_{2m}^{-1})
$$
is an extreme point of the set $Q$ defined by
$$Q\coloneqq\{(t_1,\dots,t_{2m})\in[0,1]^{2m}:(t_1^{-1},\dots,t_{2m}^{-1})\text{ satisfies conditions \eqref{A} and \eqref{B}}\};$$
in this case we say that $\mathbf p$ is extreme. Since $Q$ is compact and convex, it may be expressed as the convex hull of its extreme points.

If $\mathbf p$ is extreme, then
\begin{enumerate}
    \item The inequality in  \eqref{B} is equality for at least one proper subspace $V < \R^n$, or
    \item $p_j\in\{1,\infty\}$ for at least one index $1\leq j\leq 2m$.
\end{enumerate}
In particular, we say that a subspace $V \leq \R^n$ is \textit{critical} if equality holds in (B).  When $W < \R^n$ is critical, we may decompose the integration on the right of  \eqref{E:Mab} as 
\begin{equation}\label{E:slicing}
\iint_{W^\perp\times W^\perp}\left(\iiint_{W\times W \times \R} \prod_{j=1}^{2m} f_j \circ \pi_j^{\mathbf a, \mathbf b}\, dx_W\, dy_W\, dt\right)dx_{W^\perp}\,dy_{W^\perp},
\end{equation}
and the inductive step will involve applying the Euclidean Brascamp--Lieb inequalities \cite{BCCTshort, BCCTlong} to the outer double integral and Proposition~\ref{P:gen} (in a lower dimensional case) to the inner triple integral.  (It is in this slicing step that we are led to consider the generalization $\scriptM^{\mathbf a, \mathbf b}$.)

Let's recall the Euclidean Brascamp--Lieb inequalities. For each $j$, we define $\pi_j^\flat$ to be the linear mapping on $\R^{2n}$ whose value at $(x,y)$ is obtained by omitting the last coordinate of $\pi_j(x,y,0)$.  That is, 
$$
\pi_j^\flat(x,y) := (L_j x,y), \qquad \pi_{j+m}^\flat(x,y) := (x,L_jy), \qquad 1 \leq j \leq m.
$$

The associated multilinear form is 
$$
\scriptM^\flat(f_1,\ldots,f_{2m}) = 
\iint \prod_{j=1}^{2m} f_j \circ \pi_j^\flat(x,y)\, dx\, dy,
$$
and it will be useful later to know when the inequality
\begin{equation} \label{E:Mflat ineq}
\scriptM^\flat(f_1,\ldots,f_{2m}) \lesssim \prod_{j=1}^{2m}\|f_j\|_{p_j}
\end{equation}
holds.  Indeed, the following proposition may be deduced directly the main result of from \cite{BCCTshort} by integrating over each copy of $\R^n$ separately. 

\begin{prop}[\cite{BCCTshort, BCCTlong}] \label{P:linear BL}
    Inequality \eqref{E:Mflat ineq} holds if and only if all of the conditions below hold: 
\begin{gather}
   \tag{$A^\flat$}\label{Aflat} n=\sum_{j=1}^{2m}\frac{\dim V_j}{p_j}+\frac{n}{p_{j+m}}=\sum_{j=1}^{2m}\frac{\dim n}{p_j}+\frac{\dim V_j}{p_{j+m}};\\
   \tag{$B1^\flat$}\label{B1flat} \dim V \leq \sum_{j=1}^{m} \frac{\dim L_jV}{p_j}+\frac{\dim V}{p_{j+m}}, \qquad V\leq\R^n;\\
   \tag{$B2^\flat$}\label{B2flat} \dim V \leq \sum_{j=1}^{m} \frac{\dim V}{p_j}+\frac{\dim L_jV}{p_{j+m}}, \qquad V\leq\R^n.
\end{gather}
\end{prop}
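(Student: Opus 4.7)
The plan is to establish sufficiency by a two-step Fubini argument, applying the Euclidean Brascamp--Lieb inequality of \cite{BCCTshort} once to the inner $x$-integral and once to the outer $y$-integral. Writing
$$\scriptM^\flat(f_1,\ldots,f_{2m}) = \int_{\R^n}\!\!\int_{\R^n}\prod_{j=1}^m f_j(L_j x,y)\prod_{j=1}^m f_{j+m}(x,L_j y)\,dx\,dy,$$
I first freeze $y$ and regard the $x$-integrand as a product of $m$ functions composed with $L_j$ and $m$ functions composed with the identity $I:\R^n\to\R^n$. Brascamp--Lieb on $\R^n$, applied to the datum $(L_1,\ldots,L_m,I,\ldots,I)$ with exponents $(p_1,\ldots,p_{2m})$, then gives the pointwise-in-$y$ bound
$$\int_{\R^n}\prod_{j=1}^m f_j(L_j x,y)\prod_{j=1}^m f_{j+m}(x,L_j y)\,dx \lesssim \prod_{j=1}^m \|f_j(\cdot,y)\|_{L^{p_j}(V_j)}\prod_{j=1}^m\|f_{j+m}(\cdot,L_j y)\|_{L^{p_{j+m}}(\R^n)},$$
whose hypotheses---the BL scaling identity on $\R^n$ and the subspace inequality for every $U\leq\R^n$---reduce respectively to the first identity in \eqref{Aflat} and to \eqref{B1flat}.

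Next, I integrate this bound in $y$ and view the result as a new Brascamp--Lieb problem on $\R^n$. Setting $G_j(y):=\|f_j(\cdot,y)\|_{L^{p_j}(V_j)}$ and $H_j(z):=\|f_{j+m}(\cdot,z)\|_{L^{p_{j+m}}(\R^n)}$ for $z\in V_j$, the first $m$ factors are functions of $y$ composed with the identity and the remaining $m$ are functions composed with $L_j$. A second invocation of \cite{BCCTshort}, now with the datum $(I,\ldots,I,L_1,\ldots,L_m)$ and the same exponent tuple, yields
$$\int_{\R^n}\prod_{j=1}^m G_j(y)\prod_{j=1}^m H_j(L_j y)\,dy \lesssim \prod_{j=1}^m \|G_j\|_{L^{p_j}(\R^n)}\prod_{j=1}^m \|H_j\|_{L^{p_{j+m}}(V_j)} = \prod_{k=1}^{2m}\|f_k\|_{p_k},$$
the final equality by Fubini. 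The hypotheses for this second application reduce to the second identity in \eqref{Aflat} and to \eqref{B2flat}, completing the sufficiency.

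For necessity, the arguments parallel those in Section~\ref{S:necessity} but take place on $\R^n\times\R^n$ rather than $\HH^n$. Separate dilations in $x$ and $y$ applied to the indicator of a product box yield the two scaling identities in \eqref{Aflat}. For \eqref{B1flat}, given $V\leq\R^n$, I test \eqref{E:Mflat ineq} against $\chi_\Omega$ with $\Omega:=\{(x,y):|x_V|\leq R,\,|x_{V^\perp}|\leq 1,\,|y|\leq 1\}$; direct computation gives $|\Omega|\sim R^{\dim V}$, $|\pi_j^\flat(\Omega)|\lesssim R^{\dim L_jV}$ for $j\leq m$, and $|\pi_{j+m}^\flat(\Omega)|\lesssim R^{\dim V}$ for $j>m$, and sending $R\to\infty$ produces \eqref{B1flat}. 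The analogous test with $x$ and $y$ interchanged yields \eqref{B2flat}.

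The only real obstacle is bookkeeping: verifying that the two Brascamp--Lieb hypotheses distribute cleanly into \eqref{Aflat}, \eqref{B1flat}, and \eqref{B2flat} without overlap or omission. The contributions from the $m$ identity copies collapse to $\dim U$ times $\sum 1/p_j$ or $\sum 1/p_{j+m}$, matching exactly the right-hand sides of \eqref{B1flat} and \eqref{B2flat}. Boundary exponents pose only cosmetic difficulties: a factor with $p_k=\infty$ can be pulled out as an $L^\infty$ norm before invoking Brascamp--Lieb (reducing the effective value of $2m$), and $p_k=1$ lies on the boundary of the Brascamp--Lieb polytope where \cite{BCCTshort, BCCTlong} remain valid.
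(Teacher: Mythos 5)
Your proof is correct and takes essentially the same route the paper indicates: the paper remarks that Proposition~\ref{P:linear BL} follows by applying the main result of \cite{BCCTshort} to each copy of $\R^n$ separately, which is exactly your two-step Fubini argument (first BL in $x$ with datum $(L_1,\ldots,L_m,I,\ldots,I)$ yielding the first identity in \eqref{Aflat} and \eqref{B1flat}; then BL in $y$ with datum $(I,\ldots,I,L_1,\ldots,L_m)$ yielding the second identity and \eqref{B2flat}). The necessity sketch via restricted-weak-type testing with anisotropic boxes is a valid and self-contained complement, though one could also read necessity off the BCCT characterization on $\R^{2n}$ by restricting to product subspaces $V\times\R^n$ and $\R^n\times V$ and subtracting the scaling identity.
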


Turning to the induction on critical subspaces argument for Proposition~\ref{P:gen}, our base case occurs when $\mathbf p$ is extreme and either $\{0\}$ is critical or there are no proper critical subspaces $W < \R^n$.  We begin by analyzing these cases.

\begin{lemma} \label{L:0 critical}
Inequality \eqref{E:Lp gen} holds if $\{0\}$ is critical.
\end{lemma}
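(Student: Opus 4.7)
The key observation I would start from is unpacking what it means for $\{0\}$ to be critical: taking $V = \{0\}$ in condition \eqref{B} gives equality $1 = \sum_{j=1}^{m}\tfrac{1}{p_j} + \tfrac{1}{p_{j+m}}$, i.e., $\sum_{k=1}^{2m}\tfrac{1}{p_k}=1$. Combined with \eqref{A}, this immediately yields the linear (Euclidean) scaling relation $n = \sum_{j=1}^m \tfrac{n_j}{p_j} + \tfrac{n}{p_{j+m}} = \sum_{j=1}^m \tfrac{n}{p_j} + \tfrac{n_j}{p_{j+m}}$, which is condition \eqref{Aflat}. Likewise, subtracting $1 = \sum 1/p_j$ from both sides of condition \eqref{B} for a general $V \leq \R^n$ recovers exactly \eqref{B1flat} and \eqref{B2flat}. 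So the hypotheses on $\mathbf p$ precisely match what is needed to invoke Proposition~\ref{P:linear BL} for the associated Euclidean form $\scriptM^\flat$.

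The plan is then to reduce $\scriptM^{\mathbf a,\mathbf b}$ to $\scriptM^\flat$ by integrating out the central $t$-variable using Hölder's inequality. Writing $\pi_j^{\mathbf a,\mathbf b}(x,y,t) = (\pi_j^\flat(x,y), t + \phi_j(x,y))$, where $\phi_j(x,y) = \pm\tfrac12(\hat L_j x + a_j)\cdot(\hat L_j y + b_j)$ (with the $\pm$ sign depending on whether $j \leq m$ or $j > m$), the inner $t$-integral for each fixed $(x,y) \in \R^n \times \R^n$ is
\begin{equation*}
\int_\R \prod_{j=1}^{2m} f_j\bigl(\pi_j^\flat(x,y),\, t+\phi_j(x,y)\bigr)\, dt.
\end{equation*}
Since $\sum 1/p_j = 1$ and the $t$-integral is translation invariant, Hölder's inequality bounds this by $\prod_{j=1}^{2m} \bigl\| f_j(\pi_j^\flat(x,y),\cdot)\bigr\|_{L^{p_j}(\R)}$. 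Note the crucial feature that the constant is $1$, independent of the shifts $\phi_j(x,y)$, and therefore independent of $\mathbf a, \mathbf b$.

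Setting $\tilde f_j(u,v) := \|f_j(u,v,\cdot)\|_{L^{p_j}(\R)}$, Fubini's theorem gives
\begin{equation*}
\scriptM^{\mathbf a,\mathbf b}(f_1,\ldots,f_{2m}) \leq \scriptM^\flat(\tilde f_1, \ldots, \tilde f_{2m}).
\end{equation*}
Applying Proposition~\ref{P:linear BL} (the hypotheses for which were verified in the first paragraph) yields $\scriptM^\flat(\tilde f_1,\ldots,\tilde f_{2m}) \lesssim \prod_j \|\tilde f_j\|_{L^{p_j}}$, and Fubini once more gives $\|\tilde f_j\|_{L^{p_j}} = \|f_j\|_{L^{p_j}}$, completing the proof.

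I do not anticipate a serious obstacle here: the argument is essentially a two-step Hölder-then-linear-Brascamp--Lieb reduction. The only points requiring care are (i) bookkeeping the two scaling identities in \eqref{A} to get both equalities in \eqref{Aflat}, and (ii) making sure the Hölder constant is genuinely independent of $(x,y)$ so that the $\mathbf a, \mathbf b$ shifts are absorbed trivially, which is what makes this lemma a clean base case for the induction on critical subspaces.
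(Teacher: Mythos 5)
Your proof is correct, but it takes a genuinely different route from the paper's. You observe that $\{0\}$ critical gives $\sum 1/p_j = 1$, which lets you apply H\"older in the central $t$-variable (with constant $1$, independent of the shifts coming from $\mathbf a, \mathbf b$), collapsing $\scriptM^{\mathbf a,\mathbf b}$ to the linear form $\scriptM^\flat$ applied to $\tilde f_j = \|f_j(\cdot,\cdot,\,\cdot\,)\|_{L^{p_j}(\R)}$; the hypotheses \eqref{Aflat}, \eqref{B1flat}, \eqref{B2flat} of Proposition~\ref{P:linear BL} follow by subtracting $\sum 1/p_j = 1$ from \eqref{A} and \eqref{B}, and Fubini finishes. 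The paper instead pushes the same algebraic observation one step further: inserting $\sum_{j=1}^{2m}1/p_j=1$ into \eqref{A} gives $\sum_{j=1}^m (n-n_j)/p_j = 0 = \sum_{j=1}^m (n-n_j)/p_{j+m}$, forcing, for each $j$, either $n_j = n$ (so $L_j = \I_n$, $\hat L_j = 0$, and the pair $\pi_j, \pi_{j+m}$ is handled directly by H\"older) or $p_j = p_{j+m} = \infty$ (so the pair drops out, and one inducts on $m$). Your argument is more modular — it reduces the Heisenberg form to the Euclidean Brascamp--Lieb inequality wholesale, treating $\R^{2n}\times\R$ as a product and using criticality of $\{0\}$ only to enable the $t$-H\"older — whereas the paper's argument exposes the rigidity hidden in the hypothesis (that the critical-$\{0\}$ case is essentially degenerate) and avoids invoking Proposition~\ref{P:linear BL} at this stage. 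Both are clean and uniform in $\mathbf a, \mathbf b$; yours is perhaps the more natural "base case" companion to the slicing argument in Lemma~\ref{L:ind step}, which also ends by invoking Proposition~\ref{P:linear BL}.
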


\begin{proof}
Since $\{0\}$ is critical, we have
$\sum_{j=1}^{2m} \tfrac1{p_j} = 1$.
Inserting this condition into \eqref{A} yields
$$n\displaystyle\sum_{j=1}^{2m}\frac{1}{p_j}=\displaystyle\sum_{j=1}^m\frac{n_j}{p_j}+\frac{n}{p_{j+m}}=\displaystyle\sum_{j=1}^m\frac{n}{p_j}+\frac{n_j}{p_{j+m}}.$$
Thus, for each $1\leq j\leq m$, we have $p_j=p_{j+m}=\infty$ or $n_j=n$.  In the former case, \eqref{E:Lp gen} is reduced to the case of $m-1$ pairs of projections.  In the latter case, $L_j = \mathbb \I_n$ and $\hat L_j=0$, so $\pi_j$ is the identity map, and \eqref{E:Lp gen} follows from H\"older's inequality.
\end{proof}

\begin{lemma} \label{L:base cases}
    If $\mathbf p$ is extreme, and no proper subspace $W<\R^n$ is critical, then one of the following must occur:  
    \begin{enumerate}[i.]
        \item For some $1 \leq j, j' \leq m$, $p_j=p_{j'+m} = \infty$ and $L_j=L_{j'}$; or 
        \item $m=1$, $L_1=0$, and $p_1=p_2=\frac{n+2}{n+1}$ .
    \end{enumerate}
\end{lemma}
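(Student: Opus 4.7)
The plan is to rule out $p_j = 1$ first (forcing $p_j = \infty$ for some $j$ by extremity), and then to handle $m = 1$ and $m \geq 2$ separately.

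To rule out $p_j = 1$, suppose $p_j = 1$ for some $1 \leq j \leq m$ (the case $j > m$ is symmetric). Setting $1/p_j = 1$, the $k = j$ summand $(n+1)/p_j = n+1$ in the second equation of \eqref{A} already matches the left-hand side, so by nonnegativity every remaining summand must vanish: $p_k = \infty$ for $k \neq j$ with $k \leq m$, and $p_{k+m} = \infty$ for all $1 \leq k \leq m$. The first equation of \eqref{A} then reduces to $n+1 = n_j + 1$, forcing $n_j = n$ and $L_j = \I_n$. But then \eqref{B1} at $V = \{0\}$ gives $1 \leq \sum_k(1/p_k + 1/p_{k+m}) = 1$, making $\{0\}$ critical and contradicting the hypothesis. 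Combined with extremity and the absence of active \eqref{B1} for any proper $V$, this forces $p_j = \infty$ for some $j$; by the symmetry between first- and second-type exponents, assume $j \leq m$. If additionally $p_{j+m} = \infty$, then case (i) holds with $j' = j$.

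For $m = 1$ with $p_2 < \infty$, subtracting the two equations of \eqref{A} yields $(n_1 - n)(1/p_1 - 1/p_2) = 0$. If $n_1 = n$, then \eqref{A} forces $1/p_1 + 1/p_2 = 1$, making $\{0\}$ critical—a contradiction; so $p_1 = p_2 =: p$, and \eqref{A} gives $p = (n+n_1+2)/(n+1)$. Evaluating \eqref{B1} at $V = V_1^\perp$ yields $p \leq (n-n_1+2)/(n-n_1+1)$; combining these two expressions for $p$ produces $n_1(n - n_1) \leq 0$, which with $0 \leq n_1 < n$ forces $n_1 = 0$, hence $L_1 = 0$ and $p_1 = p_2 = (n+2)/(n+1)$, which is case (ii).

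For $m \geq 2$ with $p_{j+m} < \infty$, I need to produce some $j' \neq j$ in $\{1,\ldots,m\}$ with $L_j = L_{j'}$ and $p_{j'+m} = \infty$, yielding case (i). The key structural observation is that \eqref{C}, written as $\sum_{k=1}^m (\dim V - \dim L_k V) u_k = 0$ with $u_k := 1/p_k - 1/p_{k+m}$, has coefficients depending only on $L_k$; hence \eqref{C} constrains only the class sums $\sum_{k \in C} u_k$ over the equivalence classes $C$ of the $L_k$'s, leaving within-class differences free. Since $\mathbf{p}$ is a vertex of $Q$ and no \eqref{B1} for a proper $V$ is active, the combined rank of \eqref{A}, \eqref{C}, and the active boundary constraints $1/p_k = 0$ must equal $2m$. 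A class-by-class bookkeeping then shows that some equivalence class of the $L_k$'s must contain both an index $k$ with $p_k = \infty$ and an index $k'$ with $p_{k'+m} = \infty$, yielding case (i).

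The main obstacle is carrying out this rank-plus-counting argument in the $m \geq 2$ case: the bookkeeping must handle degenerate classes carefully, in particular $L_k = \I_n$ (whose coefficient in \eqref{C} vanishes identically, contributing nothing to the class-sum rank) and $L_k = 0$ (whose coefficient is maximal, effectively pinning $u_k = 0$ individually). Verifying that no partition of the classes into purely-first-type and purely-second-type subsets is consistent with extremity and the absence of proper critical subspaces requires a careful combinatorial analysis, in which one identifies—as a function of the partition—an explicit proper $V$ that would have to be critical.
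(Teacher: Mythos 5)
Your treatment of the two easier components matches the paper: ruling out $p_k=1$ via \eqref{A} (every $p_k=1$ forces all other exponents to be $\infty$, hence $\sum_k p_k^{-1}=1$ and $\{0\}$ is critical), and the $m=1$ computation with $V=\ker L_1$ forcing $n_1=0$ and $p_1=p_2=\tfrac{n+2}{n+1}$. One slip there: the assertion that extremity forces some $p_j\in\{1,\infty\}$ is only valid for $m\geq 2$ (where the equality constraints have rank at most $m+1<2m$); for $m=1$ the two equations of \eqref{A} can already have rank $2=2m$ and pin down the extreme point with both exponents finite, which is exactly what happens in Conclusion ii. As written, your $m=1$ branch is conditioned on $p_1=\infty$ and is therefore vacuous (since you then derive $p_1=p_2<\infty$); the fix is simply to run the $m=1$ computation without assuming any exponent is infinite.

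The genuine gap is the $m\geq 2$ case, which is the heart of the lemma and which you explicitly defer as "the main obstacle." Your framework (rank counting at a vertex; \eqref{C} constrains only the class sums of $u_k:=p_k^{-1}-p_{k+m}^{-1}$ over equivalence classes of equal $L_k$) is sound, but the bookkeeping you gesture at does not close the argument, and your proposed endgame --- exhibiting, as a function of the partition, "an explicit proper $V$ that would have to be critical" --- is not how the case is resolved and there is no indication of how such a $V$ would be found. The missing ingredients are: (a) a split on the number $l$ of infinite exponents, with $l\geq m+1$ handled by pigeonholing a pair $p_j=p_{j+m}=\infty$, and $l=m-1$ handled by noting that the rank-$2m$ requirement forces \eqref{C} to have full rank $m$ in the $u_k$, hence $u_k\equiv 0$, hence $p_j=p_{j+m}$ for all $j$ and again a doubly infinite pair; and (b), decisively, in the remaining case $l=m$ with \eqref{C} of rank $m-1$: after reducing a possible single coincident pair $L_r=L_s$, the $L_j$ are distinct, and one chooses a maximal kernel $K_1=\ker L_1$ and a vector $v\in K_1\setminus\bigcup_{j\geq 2}K_j$ (Baire category / genericity), so that \eqref{C} applied to $V=\langle v\rangle$ has coefficient $1$ on $u_1$ and $0$ on all other $u_j$, yielding $p_1=p_{1+m}$; a final pigeonhole on the $m$ infinite exponents among the remaining $m-1$ pairs then produces Conclusion i. Your class-sum observation does yield Conclusion i directly for any non-singleton class (its coordinate projection of the equality constraints has rank at most $2$, forcing at least $2c_i-2$ boundary constraints in a class of size $c_i$, which mixes the two types when $c_i\geq 2$), but it gives nothing for singleton classes, which is precisely where the geometric input (b) is indispensable.
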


\begin{proof}
We first assume that $m = 1$.  If $L_1=0$, then by applying \eqref{A}, we see that Conclusion ii holds. If $\text{rank }L_1=n$, then $\tfrac1{p_1}+\tfrac1{p_2}=1$ from \eqref{A}; that is, $\{0\}$ is a critical subspace, contradicting our assumption. Finally, if $0 < \text{rank } L_1 < n$, then taking $V:=\ker L_1<\R^n$ in \eqref{A}, we obtain $p_1=p_2=2-\frac{\dim V}{n+1}$. Furthermore, \eqref{B} yields
$$\dim V+1<\frac{\dim V+2}{p_1},$$
which, inserting the values of $p_1,p_2$ that we just obtained, becomes
$$2-\frac{\dim V}{n + 1} = p_1 < \frac{\dim V + 2}{\dim V + 1} = 2-\frac{\dim V}{\dim V + 1},$$
    contradicting $0<\dim V < n$.

To summarize, Conclusion ii holds when $m=1$.

Now we assume $m>1$. Because \eqref{C} may be rewritten as a system of equations for $p_j^{-1}-p_{j+m}^{-1}$, and the second equation in \eqref{A} is \eqref{C} in the case $V=\R^n$,  \eqref{A} and \eqref{C} collectively determine at most $m+1$ independent linear equations.  Since we have assumed that $\mathbf p$ is extreme and that equality never holds in \eqref{B}, we must have $p_j\in\{1,\infty\}$ for $l\geq 2m-(m+1)=m-1$ values of $j$. However,  $p_j=1$ cannot hold.  Indeed, if $p_k=1$ for some $1 \leq k \leq 2m$, then by \eqref{A} and the fact that $\mathbf p\in[1,\infty]^{2m}$, we have $p_j=\infty$ for all $1 \leq j\neq k \leq 2m$, whence,  $\sum_{j=1}^{2m}1/p_j=1/p_k=1$, so $\{0\}$ is critical, contradicting our assumption that \eqref{B} never holds.  

Now we have that $p_j=\infty$ for $l \geq m-1 \geq 1$ values of $j$.  If $l \geq m+1$, then $p_j=p_{j+m}=\infty$ for some $1 \leq j \leq m$ by pigeonholing, so Conclusion i holds with $j=j'$.  If $l=m-1$, \eqref{A} and \eqref{C} together contain exactly $m+1$ linearly independent equations.  In particular, \eqref{C} contains $m$ independent homogeneous equations for $p_j^{-1}-p_{j+m}^{-1}$, whence $p_j=p_{j+m}$ for all $j$.  Since $p_j=\infty$ for some $j$, Conclusion i holds with $j=j'$.  

We turn now to the last case, that $l=m$ and exactly half of the $p_j$ are infinite. If $p_j=p_{j+m}$ for some $j$, then, by pigeonholing, for some (possibly different) $j'$, it must hold that $p_{j'} = p_{j'+m}=\infty$, so we are in Conclusion i. 

Now we may assume that exactly one of $p_j$, $p_{j+m}$ equals infinity for every $1 \leq j \leq m$.  Suppose that the $L_j$, $1 \leq j \leq m$, are distinct; equivalently, their kernels $K_j:=\ker L_j = V_j^\perp$ are distinct. Reordering indices if necessary, we may assume that $K_1$ is not contained in any $K_j$ with $1 < j \leq m$.  Therefore $K_1 \not \subset \bigcup_{j=2}^m K_j$.  Let $v \in K_1 \setminus \bigcup_{j=2}^m K_j$, and consider $V:=\langle v \rangle$, the span of $v$.  Applying \eqref{C} with this $v$, $p_1=p_{1+m}$, but this contradicts our assumption that exactly one of each $p_j,p_{j+m}$ is infinite.  Tracing back, the $L_j$ cannot all be distinct.  

Finally, we may assume that $L_r=L_s$ for some $1 \leq r \neq s \leq m$.  The equations \eqref{A} and \eqref{C} cannot distinguish $p_r^{-1}$ from $p_s^{-1}$, so for $\mathbf p$ to be extreme in the absence of \eqref{B}, it must hold that $p_r=\infty$ or $p_s=\infty$ (since neither can equal 1).  Similarly, it must also hold that $p_{r+m} = \infty$ or $p_{s+m} = \infty$.  This once again puts us in the situation described by Conclusion i, completing the proof of the Lemma.  
\end{proof}

To complete the proof that Proposition~\ref{P:gen} holds in the base case of our induction argument, it remains to prove that \eqref{E:Lp gen} holds under Conclusions i and ii from  Lemma~\ref{L:base cases}.  In fact, it suffices to consider only Conclusion ii.  Indeed, in Conclusion i, after reindexing, we may assume that $j=j'$.  By H\"older's inequality, inequality \eqref{E:Lp ineq sym} is equivalent to the analogous inequality for a form with a lower order of multilinearity and exponents that still obey (A) and (B), and the estimate is obtained by induction on the order of multilinearity.  

\begin{lemma}\label{caseiiproof}
    Inequality \eqref{E:Lp gen} holds under Conclusion ii of  Lemma~\ref{L:base cases}.  
\end{lemma}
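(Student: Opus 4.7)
The plan is to reduce \eqref{E:Lp gen} under Conclusion~ii to a classical $L^p\to L^q$ endpoint bound for a Radon-like averaging operator on $\R^{n+1}$.

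Since Conclusion~ii gives $m=1$, $L_1=0$, $\hat L_1=\I_n$, and $p_1=p_2=\tfrac{n+2}{n+1}$, each $f_j$ is a function of $n+1$ variables, and
$$\scriptM^{\mathbf a,\mathbf b}(f_1,f_2)=\iiint f_1\bigl(y,t+\tfrac12(x+a_1)\cdot(y+b_1)\bigr)\,f_2\bigl(x,t-\tfrac12(x+a_2)\cdot(y+b_2)\bigr)\,dx\,dy\,dt.$$
I would first absorb $(\mathbf a,\mathbf b)$ by centering $x\mapsto x-\tfrac12(a_1+a_2)$, $y\mapsto y-\tfrac12(b_1+b_2)$; after this the two quadratic phases become $\pm\tfrac12 x\cdot y$ plus a common linear form $L(x,y)$ plus opposite additive constants $\pm\tfrac12\alpha\cdot\beta$ (with $\alpha=\tfrac12(a_1-a_2)$, $\beta=\tfrac12(b_1-b_2)$). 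The linear form is eliminated by the unit-Jacobian change $t\mapsto t-L(x,y)$, and the constants are absorbed into translates of $f_1,f_2$ in their second argument (preserving $L^{p_j}$-norms). A further shift $t\mapsto t-\tfrac12 x\cdot y$ brings the form to
$$\iiint f_1(y,t)\,f_2(x,t-x\cdot y)\,dx\,dy\,dt.$$

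This integral is a duality pairing. Defining
$$\scriptR g(t,x):=\int_{\R^n} g(y,t+x\cdot y)\,dy,$$
the integral of $g:\R^n\times\R\to[0,\infty)$ over the non-vertical affine hyperplane $\{(y,s)\in\R^{n+1}:s=t+x\cdot y\}$, the substitution $t\mapsto t+x\cdot y$ rewrites the reduced form as $\iint f_2(x,t)\,\scriptR f_1(t,x)\,dx\,dt$. H\"older's inequality with $(p_2)'=n+2$ then yields
$$\scriptM^{\mathbf a,\mathbf b}(f_1,f_2)\le\|f_2\|_{p_2}\,\|\scriptR f_1\|_{n+2},$$
so the lemma reduces to the Radon-type endpoint estimate
\begin{equation*}
\|\scriptR g\|_{L^{n+2}(\R^{n+1})}\lesssim\|g\|_{L^{(n+2)/(n+1)}(\R^{n+1})}.
\end{equation*}

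The bound on $\scriptR$ is a classical $L^p\to L^q$ endpoint estimate for the Radon transform on $\R^{n+1}$. Indeed, the rotation-invariant parameterization $(\omega,s)\in S^n\times\R$ of non-vertical affine hyperplanes in $\R^{n+1}$ is related to our $(t,x)\in\R\times\R^n$ by $\omega=(-x,1)/\sqrt{1+|x|^2}$ and $s=t/\sqrt{1+|x|^2}$; under this, the standard Radon transform $Rg(\omega,s)$ (with hyperplane surface measure) equals $\sqrt{1+|x|^2}\,\scriptR g(t,x)$, while the invariant measure pulls back as $d\omega\,ds=(1+|x|^2)^{-(n+2)/2}\,dt\,dx$. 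At the exponent $q=n+2$ the weight cancels exactly, giving $\|Rg\|_{L^{n+2}(S^n\times\R)}=\|\scriptR g\|_{L^{n+2}(\R^{n+1})}$, so the desired estimate is exactly the classical Oberlin--Stein-type $L^{(n+2)/(n+1)}\to L^{n+2}$ endpoint bound for the Radon transform on $\R^{n+1}$, which is also contained in the results of \cite{Littman73, StovallMultilin} cited in the introduction. The main substance of the proof lies in this last step; the preceding reductions are formal.
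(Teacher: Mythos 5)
Your argument is correct and follows the same route as the paper: absorb $(\mathbf a,\mathbf b)$ by affine changes of variables to reduce to the bilinear form in \eqref{E:radon}, then invoke the endpoint $L^{(n+2)/(n+1)}\to L^{n+2}$ bound for the Radon transform. The paper stops at citing this endpoint estimate, while you additionally carry out the duality reduction to $\scriptR$ and verify explicitly (via the $\omega=(-x,1)/\sqrt{1+|x|^2}$ change of variables and the cancellation of the Jacobian weight at the exponent $q=n+2$) that $\scriptR$ is an isometric reparameterization of the classical Radon transform; this extra detail is correct and simply makes explicit what the paper leaves to the cited references.
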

\begin{proof}
By assumption,
$$\pi_j^{\mathbf a,\mathbf b}(x,y,t)=\begin{cases}
    (y,t+\frac{1}{2}(x+a_1)\cdot(y+b_1)),&j=1,\\
    (x,t-\frac{1}{2}(x+a_1)\cdot(y+b_1)),&j=2.
\end{cases}$$
Therefore, after a change of variables,
$$
\scriptM^{\mathbf a, \mathbf b}(f_1,f_2) =\bigintsss_{\R^{2n+1}}f_1(y-b_1,t+\frac{1}{2}x\cdot y)\,f_2(x-a_1,t-\frac{1}{2}x\cdot y)\,dx\,dy\,dt.
$$
By a further change of variables, it suffices to consider the case $\mathbf a = \mathbf b = 0$.  In summary, we wish to prove the inequality
\begin{equation} \label{E:radon}
    \int_{\R^{2n+1}} f_1(y,t+\tfrac12 x \cdot y)\, f_2(x,t-\tfrac12 x \cdot y)\, dx\, dy\, dt \lesssim \|f_1\|_{\frac{n+2}{n+1}}\|f_2\|_{\frac{n+2}{n+1}},
\end{equation}
which is well-known. 
Indeed, \eqref{E:radon} is  better known as the endpoint bound for the Radon transform \cite{Calderon83, oberlin1982mapping}, or, equivalently \cite{christ2014extremizers}, the endpoint bound for convolution with affine surface measure on the paraboloid \cite{Littman73}.  
\end{proof}

Finally, we turn to the inductive step initiated with the decomposition \eqref{E:slicing}.

\begin{lemma}\label{L:ind step}
If $\{0\} \neq W < \R^n$ is a proper critical subspace and Proposition~\ref{P:gen} holds with $n$ replaced by $\dim W$, then \eqref{E:Lp gen} holds for the given data.      
\end{lemma}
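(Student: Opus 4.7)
The plan is to slice the Heisenberg integration along the direct sum $\R^{2n+1} = (W \times W \times \R) \oplus (W^\perp \times W^\perp)$ via \eqref{E:slicing}, bound the inner triple integral via the inductive hypothesis (Proposition~\ref{P:gen} in dimension $\dim W$), and bound the resulting outer double integral via the Euclidean Brascamp--Lieb inequality (Proposition~\ref{P:linear BL}) on $W^\perp$.

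For the inner step, with $(x_{W^\perp}, y_{W^\perp})$ fixed, I write $x = x_W + x_{W^\perp}$, $y = y_W + y_{W^\perp}$ and decompose $\hat L_j x_W = \hat L_j^W x_W + \hat L_j L_j^W x_W$, where $L_j^W$ is the orthogonal projection in $W$ onto $V_j^W := (W \cap V_j^\perp)^{\perp_W}$. A short calculation, using that $\hat L_j^W x_W \in W \cap V_j^\perp$ is orthogonal in $\R^n$ to $\hat L_j L_j^W y_W$, shows that the coupling $(\hat L_j x_W + a_j')\cdot(\hat L_j y_W + b_j')$ equals the standard $\HH^{\dim W}$-coupling $(\hat L_j^W x_W + a_j'')\cdot_W(\hat L_j^W y_W + b_j'')$ plus bilinear-, linear-, and constant-order corrections. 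Crucially, these corrections depend on $(x_W, y_W)$ only through the combinations entering the arguments $(u, v) = (L_j x, y)$ of $f_j$, so I absorb them by passing from $f_j$ to $\tilde f_j(u, v, s) := f_j(u, v, s + \tfrac12 E_j(u, v))$. The $s$-shift has Jacobian one, so $\|\tilde f_j\|_{p_j} = \|f_j\|_{p_j}$, and the inner integral becomes an instance of $\scriptM^{\mathbf a'', \mathbf b''}$ on $\HH^{\dim W}$ with projections $L_j^W : W \to V_j^W$. The inductive hypothesis then applies after verifying that \eqref{A} on $\HH^{\dim W}$ follows by subtracting the two critical equalities in \eqref{B} at $V = W$ from the two halves of \eqref{A} on $\R^n$; that \eqref{B1} and \eqref{B2} on $\HH^{\dim W}$ follow from the same on $\R^n$ via the identity $\dim L_j^W V^W = \dim L_j V^W$ for $V^W \leq W$; and that \eqref{C} on $W$-subspaces is the restriction of \eqref{C} on $\R^n$.

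After the inner bound, $\scriptM^{\mathbf a, \mathbf b} \lesssim \iint_{W^\perp \times W^\perp} \prod_j h_j\, dx_{W^\perp}\, dy_{W^\perp}$ with $h_j(x_{W^\perp}, y_{W^\perp}) := \|\tilde f_j^{(x_{W^\perp}, y_{W^\perp})}\|_{p_j}$. A Fubini computation shows $h_j$ factors through the map $(x_{W^\perp}, y_{W^\perp}) \mapsto ([L_j x_{W^\perp}]_{V_j / L_j W}, y_{W^\perp})$ for $j \leq m$ (symmetrically for $j > m$), so after identifying $V_j / L_j W$ with a subspace $\tilde V_j^{W^\perp} \leq W^\perp$ of dimension $d_j := \dim V_j - \dim L_j W$ (via orthogonal projection in $W^\perp$), the outer integral takes the form treated by Proposition~\ref{P:linear BL}. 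The required scaling \eqref{Aflat} on $W^\perp$, namely $\dim W^\perp = \sum_j d_j / p_j + \dim W^\perp / p_{j+m}$ and its dual, follows by subtracting the critical equalities in \eqref{B} at $V = W$ from the two halves of \eqref{A} on $\R^n$. The dimension conditions \eqref{B1flat}, \eqref{B2flat} on $W^\perp$ follow, for arbitrary $V'' \leq W^\perp$, by subtracting these same critical equalities from \eqref{B1}, \eqref{B2} on $\R^n$ evaluated at $V = V'' + W$. A final Fubini, using the orthogonal decompositions $V_j = L_j W \oplus (L_j W)^{\perp_{V_j}}$ and $\R^n = W \oplus W^\perp$, identifies each $\|\tilde H_j\|_{p_j}$ with a constant multiple of $\|f_j\|_{p_j}$, completing the bound.

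The main obstacle is the bilinear correction $(\hat L_j L_j^W x_W)\cdot(\hat L_j L_j^W y_W)$ appearing in the inner coupling, which is nontrivial precisely when $L_j W \cap L_j W^\perp \neq \{0\}$ (equivalently, when the $W$-splitting is incompatible with the $V_j$-splitting). Being genuinely bilinear in $(x_W, y_W)$, it cannot be absorbed into the linear shifts $\mathbf a'', \mathbf b''$ of the Heisenberg form and must instead be pushed into the $t$-argument of $f_j$ itself. The same incompatibility is what forces the correct outer Brascamp--Lieb target dimension to be $d_j = \dim V_j - \dim L_j W$ rather than the naive $\dim L_j(W^\perp)$; with the naive choice the scaling \eqref{Aflat} would fail by exactly $\sum_j \dim(L_j W \cap L_j W^\perp)/p_j$.
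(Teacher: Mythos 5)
Your proposal is correct and takes essentially the same route as the paper: slice along the critical subspace $W$, recognize the inner integral over $W\times W\times\R$ as a lower-dimensional instance of $\scriptM^{\mathbf a,\mathbf b}$ after absorbing the cross terms of the coupling into the functions via measure-preserving shears in the central variable (your $E_j$; the paper's terser ``further coordinate change in the $g_j$''), apply the inductive hypothesis there, and finish by applying the Euclidean Brascamp--Lieb inequality to the outer integral over $W^\perp\times W^\perp$, with all hypotheses verified by subtracting the critical equality. One small bookkeeping slip: condition \eqref{A} for the inner $\dim W$-dimensional form is not obtained by subtracting the critical equality in \eqref{B} at $V=W$ from \eqref{A} on $\R^n$ --- it \emph{is} that critical equality (since $\dim V_j^W=\dim L_jW$); the subtraction instead yields the outer scaling \eqref{Aflat}, as you correctly state later.
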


\begin{proof}
Let $\{0\} \neq W < \R^n$ be a critical subspace.  For $x \in \R^n$ and $\xi_j \in V_j$, we write $x=x'+x''$ and $\xi_j=\xi_j'+\xi_j''$, where 
$$
x':=\rm{pr}_W x, \qquad \xi_j':=\rm{pr}_{W_j} \xi_j,
$$
where $W_j:=L_j W$, $j=1,\dots,m$.  We introduce an auxillary multilinear form, 
\begin{align*}
    &\scriptM^{\mathbf a, \mathbf b, x'',y''}(g_1,\ldots,g_{2m})\\
    &\quad := 
    \iiint_{W^2 \times \R} \prod_{j=1}^m g_j(L_j x'+(L_jx'')',y',t+\tfrac12(\hat L_j x'+\mathfrak a_j) \cdot (\hat L_j y'+\mathfrak b_j)) \\
    &\qquad \times g_{j+m}( x',L_{j+m} y'+ (L_{j+m} y'')',t-\tfrac12(\hat L_{j+m} x'+\mathfrak a_{j+m}) \cdot (\hat L_{j+m} y'+\mathfrak b_{j+m}))\\
    &\qquad dx'\,dy'\,dt,
\end{align*}
where $\mathfrak a_j:= a_j + \hat L_j x''$ and $\mathfrak b_j := b_j+\hat L_j y''$ for $j=1,\dots,2m$.  Thus the inner (triple) integral in \eqref{E:slicing} equals $\scriptM^{\mathbf a, \mathbf b, x'',y''}(g_1,\ldots,g_{2m})$, with 
\begin{equation} \label{E:gk}
\begin{aligned}
g_j &= g_j^{{\L}_j x'',y''}(\xi_j',y',s):=f_j(\xi_j' + {\L}_j x'',y'+y'',s),\\
g_{j+m} &= g_{j+m}^{x'',{\L_{j+m}}y''}(x',\eta_j',s):=f_{j+m}(x' +  x'',\eta_j'+{\L}_{j+m} y'',s), \qquad 1 \leq j \leq m,
\end{aligned}
\end{equation}
where $\L_j = \rm{pr}_{W_j^\perp} L_j|_{W^\perp}$, $j=1,\dots,m$.

We claim that 
\begin{equation} \label{E:Mabx''y''}
\scriptM^{\mathbf a, \mathbf b, x'', y''}(g_1,\ldots,g_{2m}) \lesssim \prod_{j=1}^m \|g_j\|_{L^{p_j}(W_j \times W \times \R)}\|g_{j+m}\|_{L^{p_{m+j}}(W \times W_{j+m} \times  \R)},
\end{equation}
for (e.g.) continuous, nonnegative $g_j$, with implicit constant independent of $\mathbf a, \mathbf b$ and $x'',y''$.  By rotating coordinates, we may assume that $W=\R^{n'} \times \{0\}$ (henceforth identified with $\R^{n'}$).  For each $j=1,\dots,m$, there exist invertible linear maps $A_j$, depending only on the $L_j$ and $W$, such that $A_j \circ L_j|_W = \mathfrak L_j$ is a projection map.  Thus (by translation invariance of $L^p$ norms), it suffices to prove \eqref{E:Mabx''y''} with $\scriptM^{\mathbf a, \mathbf b, x'', y''}$ replaced by 
\begin{align} \label{E:new form}
&\int_{\HH^{n'}}\prod_{j=1}^m g_j (\mathfrak L_j u,v,t+\tfrac12(\hat{ L}_ju + \mathfrak a_j) \cdot (\hat{ L}_jv + \mathfrak b_j))\\\notag
 &\qquad\qquad \times g_{j+m} ( u,\mathfrak L_{j+m} v,t-\tfrac12(\hat{ L}_{j+m} u + \mathfrak a_{j+m}) \cdot (\hat{ L}_{j+m} v+ \mathfrak b_{j+m}))\,du\,dv\,dt,
\end{align}
where  $\mathfrak a_j$ depends on $a_j, x'', L_j, W$ (analogously for $\mathfrak b_j$) for $j=1,\dots,2m$.  Since the kernels of $L_j$ and $\hat L_j$ have trivial intersection, the same holds for the restriction of these maps to $W$.  Moreover, after a further coordinate change in the $g_j$ (with constant Jacobian determinant depending only on the $L_j$ and $W$), we may replace the $\hat L_j|_W$ in \eqref{E:new form} with $\hat{\mathfrak L}_j := \mathbf I_{n'}-\mathfrak L_j$.  The resulting multilinear form now takes the form \eqref{E:Mab}, and, furthermore, condition (A) for this form follows by the assumption that $W$ is critical, and (B) and (C) for the new form are directly inherited by those conditions for the original form for $V \leq W$.   We may now apply the inductive hypothesis to $\scriptM^{\mathbf a, \mathbf b, x'',y''}$ to conclude the claimed inequality \eqref{E:Mabx''y''}.  

Inserting this estimate into \eqref{E:slicing}, 
\begin{equation} \label{E:outer form}
\scriptM^{\mathbf a, \mathbf b}(f_1,\ldots,f_m) \lesssim 
\iint_{W^\perp \times W^\perp} \prod_{j=1}^m F_j({\L}_j x'',y'')\,F_{j+m}(x'',{\L}_{j+m}y'')\, dx''\, dy'',
\end{equation}
where
$$
F_j(\xi_j'',y''):=\|g_j^{\xi_j'',y''}\|_{L^{p_j}(W_j \times W \times \R)}, \quad F_{j+m}(x'',\eta_j''):=\|g_{j+m}^{x'',\eta_j''}\|_{L^{p_{j+m}}(W \times W_{j+m} \times \R)},
$$
with the $g_k$ defined as in \eqref{E:gk}.  

Finally, we wish to apply Proposition~\ref{P:linear BL} to the right hand side of \eqref{E:outer form}.  Indeed, conditions \eqref{Aflat}, \eqref{B1flat}, and \eqref{B2flat} for the maps ${\L}_j$ and vector spaces $U \leq W^\perp$ may be deduced by writing conditions \eqref{A} and \eqref{B} with maps $L_j$ and vector spaces $V=W+U$ and then subtracting the equality case, \eqref{B} with $V=W$.  

We obtain
$$
\scriptM^{\mathbf a, \mathbf b}(f_1,\ldots,f_{2m}) \lesssim \prod_{j=1}^m \|F_j\|_{L^{p_j}(W_j^\perp \times W)}\|F_{j+m}\|_{L^{p_{j+m}}(W^\perp \times W_j^\perp)},
$$
and \eqref{E:Lp gen} follows by Fubini.  
\end{proof}



\end{document}